\newtheorem{theorem}{Theorem}
\newtheorem{definition}{Definition}
\newtheorem{remark}{Remark}
\title{Polynomials invertible in $k$-radicals}
\author{Y. Burda, A. Khovanskii}
\begin{document}

\maketitle

\begin{abstract}
A classic result of Ritt describes polynomials invertible in radicals: they are compositions of power polynomials, Chebyshev polynomials and polynomials of degree at most 4. In this paper we prove that a polynomial invertible in radicals and solutions of equations of degree at most $k$ is a composition of power polynomials, Chebyshev polynomials, polynomials of degree at most $k$ and, if $k\leq 14$, certain polynomials with exceptional monodromy groups. A description of these exceptional polynomials is given. The proofs rely on classification of monodromy groups of primitive polynomials  obtained by M\"{u}ller based on group-theoretical results of Feit and on previous work on primitive polynomials with exceptional monodromy groups by many authors.
\end{abstract}

\section{Introduction}

This paper is devoted to a generalization of a result of Ritt on polynomials invertible in radicals:

\begin{theorem}[Ritt, \cite{Ritt22Radicals}]
\label{thm:ritt}
The inverse function of a polynomial with complex coefficients can be represented by radicals if and only if the polynomial is a composition of linear polynomials, the power polynomials $z\to z^n$, Chebyshev polynomials and polynomials of degree 4.
\end{theorem}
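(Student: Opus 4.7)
The plan is to translate the statement into group theory via the classical correspondence between expressibility in radicals and solvability of a Galois group. For a polynomial $p$ of degree $n$, the inverse $p^{-1}$ is expressible in radicals precisely when the Galois group of $p(z)-w$ over $\mathbb{C}(w)$ is solvable; this group coincides with the monodromy group $G_p \leq S_n$ of the branched covering $p\colon\mathbb{C}\to\mathbb{C}$. The theorem is therefore equivalent to identifying the polynomials whose monodromy is solvable.

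The easy direction is to verify that every polynomial on the claimed list has solvable monodromy. The monodromy of $z\mapsto z^n$ is cyclic of order $n$, that of the $n$-th Chebyshev polynomial is dihedral of order $2n$, and any polynomial of degree at most $4$ has monodromy inside the solvable group $S_4$. Solvability is preserved under composition via the standard embedding $G_{q\circ p}\hookrightarrow G_q\wr G_p$ of the monodromy of a composition into a wreath product of factor monodromies.

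For the converse, assume $G_p$ is solvable. By Ritt's first theorem on polynomial decomposition, $p$ factors as a composition of indecomposable polynomials $p=p_1\circ\cdots\circ p_r$, and each $G_{p_i}$ is a subquotient of $G_p$, hence solvable. Indecomposability of $p_i$ is equivalent to primitivity of $G_{p_i}$ on its fiber; moreover, since $\infty$ is totally ramified, $G_{p_i}$ contains a cycle of full length $n_i=\deg p_i$. The problem therefore reduces to the group-theoretic task of classifying primitive solvable subgroups of $S_n$ containing an $n$-cycle. A classical theorem attributed to Galois and Burnside shows that such a group has prime-power degree $n=\ell^d$ and embeds in $\mathrm{AGL}_d(\mathbb{F}_\ell)$; combining this structure with the presence of the $n$-cycle forces either $n=\ell$ prime with $G\leq\mathrm{AGL}_1(\mathbb{F}_\ell)$ a metacyclic Frobenius group, or else $n\leq 4$ (in which case $S_n$ is itself solvable).

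The final step is to lift the group-theoretic classification back to polynomials. Standard rigidity results identify the indecomposable polynomials of prime degree with cyclic monodromy as power polynomials and those with dihedral monodromy as Chebyshev polynomials, each up to linear change of variables on source and target; indecomposable polynomials of degree at most $4$ need no further analysis. Recomposing these building blocks along the Ritt decomposition of $p$ then yields precisely the list in the statement. I expect the principal obstacle to be the classification of primitive solvable subgroups of $S_n$ ($n>4$) containing a full cycle together with the accompanying rigidity statement realizing cyclic and dihedral monodromies exclusively as the power and Chebyshev families; once these two ingredients are in hand, the remainder of the argument is bookkeeping.
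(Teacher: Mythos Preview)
Your outline matches the paper's own proof sketch almost step for step: reduce to primitive factors, identify the Galois group of $p(z)=w$ with the monodromy group, classify primitive solvable permutation groups containing a full cycle, and then recognize the resulting polynomials. There is, however, one genuine gap.

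After you conclude that for $n>4$ the primitive solvable group $G_{p_i}$ with an $n_i$-cycle must have prime degree $n_i=\ell$ and sit inside $\mathrm{AGL}_1(\mathbb{F}_\ell)$, you pass directly to ``cyclic'' and ``dihedral'' and invoke rigidity to pin down power and Chebyshev polynomials. But $\mathrm{AGL}_1(\mathbb{F}_\ell)$ has many intermediate subgroups $\{x\mapsto ax+b : a\in H,\ b\in\mathbb{F}_\ell\}$ for arbitrary $H\le\mathbb{F}_\ell^{\,*}$, and nothing in the purely group-theoretic classification excludes them. The paper inserts an extra step here (its step~6): a Riemann--Hurwitz computation shows that among these subgroups only the cyclic group ($H=\{1\}$) and the dihedral group ($H=\{\pm 1\}$) can actually occur as the monodromy of a genus-zero cover totally ramified over~$\infty$, i.e.\ of a polynomial. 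Your appeal to ``rigidity'' handles the \emph{uniqueness} of the polynomial once the monodromy is known to be cyclic or dihedral; it does not perform this prior elimination. Adding the Riemann--Hurwitz argument (or an equivalent count of ramification contributions from the non-identity elements of $H$) closes the gap, and then your proof coincides with the paper's.
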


In the paper we give a complete description of polynomials invertible in $k$-radicals, i.e. in radicals and solutions of equations of degree at most $k$. The main result appears in Theorem \ref{thm:ksolvabilitynonprimitive}. A more complete description of polynomials appearing in its formulation appears in sections \ref{section:deg5}-\ref{section:highdegree}.

The description of polynomials invertible in $k$-radicals uses deep group-theoretical result of Feit on primitive permutation groups containing a full cycle, its refinement obtained by G. Jones and work of P. M\"{u}ller that builds on it to provide a classification of monodromy groups of primitive polynomials.

Description of the polynomials with primitive monodromy groups that appear in formulation of Theorem \ref{thm:ksolvabilitynonprimitive} is mostly known. However it is scattered among many papers, is not complete and is not formulated in a way that we needed for our purposes. To get a description that suited our purposes we consulted P. M\"{u}ller. We are especially thankful to Alexandr Zvonkin who helped us understand many of the results. However we still have some unanswered questions on exceptional polynomials of degree 15. The corresponding result had been obtained in \cite{CassouNogues99Factorizations}, however it is formulated there with too few details and without a proof. We provide a tentative description of these polynomials in \ref{section:deg15}. We will improve it before submitting a final version of this paper to print.

We would like to thank P. M\"{u}ller for his answers to our questions. We are especially thankful to Alexandr Zvonkin, whose generous help had been of great use to us.

\section{Formulation of the problem and its answer}

\begin{definition}
Let $k$ be a natural number. A field extension $L/K$ is $k$-radical if there exists a tower of extensions $K=K_0\subset K_1\subset \ldots \subset K_n$ such that $L\subset K_n$ and for each $i$, $K_{i+1}$ is obtained from $K_i$ by adjoining an element $a_i$, which is either a solution of an algebraic equation of degree at most $k$ over $K_i$, or satisfies $a_i^m=b$ for some natural number $m$ and $b\in K_i$.
\end{definition}

\begin{definition}
An algebraic function $z=z(x)$ of one variable is said to be representable in  $k$-radicals if the extension $K(z)/K$ is $k$-radical, where $K=F(x)$ is the field of rational functions over the base field $F$.
\end{definition}

In particular an algebraic function is representable in $1$-radicals if and only if it is representable in radicals.

In this paper we prove the following theorem:

\begin{theorem}
\label{thm:ksolvabilitynonprimitive}
A complex polynomial is invertible in $k$-radicals if and only if it is a composition of polynomials of degree at most $k$, power polynomials, Chebyshev polynmomials and polynomials from the following list (which depends on $k$):
\begin{enumerate}
\item for $1\leq k\leq 4$,  polynomials of degree 4,
\item \label{case:pgl25} for $k=5$, polynomials of degree 6 with monodromy group isomorphic to $PGL_2(5)$ with its natural action on the points of the projective line $P^1(F_5)$,
\item \label{case:pgammal29} for $k=6$, polynomials of degree 10 with monodromy group isomorphic to $P\Gamma L_2(9)$ with its natural action on the points of the projective line $P^1(F_{9})$,
\item \label{case:pgl27} for $k=7$, polynomials from list \ref{case:pgammal29} above and polynomials of degree 8 with monodromy group isomorphic to $PGL_2(7)$  with its natural action on the points of the projective line $P^1(F_7)$,
\item \label{case:pgl42} for $8\leq k\leq 14$, polynomials from list \ref{case:pgammal29} and polynomials of degree 15 with monodromy group isomorphic to $PSL_4(2)$  with its natural action either on points, or hyperplanes of the projective space $P^3(F_2)$.
\end{enumerate}

\end{theorem}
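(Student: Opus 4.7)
The plan is to reformulate the problem group-theoretically via monodromy, reduce to primitive polynomials by Ritt's decomposition theorem, and then apply the Feit--Jones--M\"uller classification. Call a finite group \emph{$k$-solvable} if it admits a subnormal series whose factors are each either cyclic or isomorphic to a subgroup of $S_k$. A standard Galois-theoretic argument shows that a polynomial $P$ is invertible in $k$-radicals if and only if its monodromy group $\operatorname{Mon}(P)$ (the Galois group of the Galois closure of $F(x)(z)/F(x)$, where $P(z)=x$) is $k$-solvable: one direction builds a tower of $k$-radical extensions from such a series; the other extracts a series from a given tower.

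Next I would use Ritt's theory of polynomial decomposition to write $P=P_1\circ\cdots\circ P_m$ with each $P_i$ indecomposable. The monodromy of a composition lies inside the iterated wreath product $\operatorname{Mon}(P_m)\wr\cdots\wr\operatorname{Mon}(P_1)$, and each $\operatorname{Mon}(P_i)$ is itself a subquotient of $\operatorname{Mon}(P)$. Since $k$-solvability is preserved by subgroups, quotients, and wreath extensions, this yields that $\operatorname{Mon}(P)$ is $k$-solvable if and only if each $\operatorname{Mon}(P_i)$ is. For a primitive $P_i$ of degree $n$, $\operatorname{Mon}(P_i)$ is a primitive subgroup of $S_n$ containing an $n$-cycle (the monodromy around infinity), and M\"uller's classification identifies all such groups: the cyclic group $\mathbb{Z}/n$ (giving power polynomials), dihedral $D_n$ (giving Chebyshev polynomials), and an explicit finite list of almost simple exceptional groups in a handful of small degrees. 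Primitive polynomials of degree $\leq k$ are trivially $k$-solvable and absorbed into the ``degree at most $k$'' clause. What remains is, for each exceptional group $G$ on M\"uller's list, to determine the minimal $k$ such that $G$ is $k$-solvable --- equivalently, the largest minimal faithful permutation degree among the non-abelian composition factors of $G$.

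The main obstacle, and the bulk of the work, is this last case analysis together with exhibiting the actual primitive polynomials realizing each exceptional monodromy. For instance $PGL_2(5)$ of degree $6$ has composition factor $A_5$, whose minimal faithful degree is $5$, giving case \ref{case:pgl25}; $P\Gamma L_2(9)$ of degree $10$ has factor $A_6$ with minimal faithful degree $6$, giving case \ref{case:pgammal29}; $PGL_2(7)$ of degree $8$ has factor $PSL_2(7)\cong GL_3(2)$ with minimal faithful degree $7$, giving case \ref{case:pgl27}; and $PSL_4(2)\cong A_8$ of degree $15$ has minimal faithful degree $8$, giving case \ref{case:pgl42}. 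The remaining almost simple exceptions on M\"uller's list --- $M_{11}$ in degree $11$, $PSL_3(3)$ in degree $13$, $M_{23}$ in degree $23$, $PSL_5(2)$ in degree $31$, and a few others --- each have a non-abelian composition factor whose minimal faithful degree equals the polynomial's own degree, so they contribute no new exceptional entries for $k\leq 14$ beyond what is already absorbed by the ``degree at most $k$'' clause when $k$ reaches $n$. Pinpointing the precise polynomials realizing each exceptional monodromy group --- in particular the delicate degree $15$ case --- is the subject of sections \ref{section:deg5}--\ref{section:highdegree}.
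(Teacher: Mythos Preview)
Your proposal is correct and follows the same route as the paper: reduce to primitive factors, translate $k$-radical invertibility into $[k]$-solvability of the monodromy group via Galois theory, invoke M\"uller's classification of primitive monodromy groups containing an $n$-cycle, and then compute, for each group on the list, the minimal faithful permutation degree of its non-abelian composition factors. One small correction to your summary of M\"uller's theorem: the list is not just cyclic, dihedral, and a finite collection of almost-simple exceptions in small degrees --- it also contains $S_n$ for every $n$ and $A_n$ for every odd $n$; these infinite families have minimal faithful degree equal to $n$ and are therefore absorbed by the ``degree at most $k$'' clause, exactly as the paper argues, but you should state this explicitly rather than leave them out of the enumeration.
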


\begin{remark}
In particular for $k\geq 15$ a polynomial is invertible in $k$-radicals, if and only if it is a composition of power polynomials, Chebyshev polynomials and polynomials of degree at most $k$.
\end{remark}

\section{Ritt's theorem}

Theorem \ref{thm:ksolvabilitynonprimitive} on polynomials invertible in $k$-radicals can be considered as a generalization of Theorem \ref{thm:ritt} of Ritt on polynomials invertible in radicals. The outline of its proof is as follows:

\begin{enumerate}
\item \textit{Every polynomial is a composition of primitive ones:} Every polynomial is a composition of polynmomials that are not themselves compositions of polynomials of degree 2 and higher. Such polynomials are called primitive.
\item \textit{Reduction to the case of primitive polynomials:} It follows from the definition of being invertible in radicals that a composition of polynomials is invertible in radicals if and only if each polynomial in the composition is invertible in radicals. Indeed, if each of the polynomials in composition is invertible in radicals, then their composition also is. Conversely, if a polynomial $R$ appears in the presentation of a polynomial $P$ as a composition $P=Q\circ R\circ S$ and $P^{-1}$ is representable in radicals, then $R^{-1}=Q\circ P^{-1}\circ S$ is also representable in radicals. Thus it is enough to classify only the primitive polynmomials invertible in radicals.
\item \label{step:kSolvabilityAndGlois}\textit{Galois group is responsible for representability in radicals:} It follows from Galois theory that an algebraic equation over a field of characteristic zero is solvable in radicals if and only if its Galois group is solvable.
\item \textit{A polynomial is invertible in radicals if and only if its monodromy group is solvable:} A polynomial $p(x)$ is invertible in radicals if and only if the Galois group of the equation $p(x)=w$ over the field $k(w)$ is solvable. According to a result of Jordan, for $k=\mathbf{C}$ this group can be identified with the monodromy group of the function $p^{-1}(w)$.
\item \label{step:PrimitiveSolvableGroupsWithCycle}\textit{A result on solvable primitive permutation groups containing a full cycle:} It follows from what we said above that a primitive polynommial is invertible in radicals if and only if its monodromy group is solvable. Since the monodromy group acts primitively on the branches of inverse of the polynomial and contains a full cycle (corresponding to a loop around the point at infinity on the Riemann sphere), the following group-theoretical result of Ritt is useful for the classification of polynomials invertible in radicals:
\begin{theorem}
\label{thm:primitivesolvablegroupswithacycle}
Let $G$ be a primitive solvable group of permutations of a finite set $X$ which contains a full cycle. Then either $|X|=4$, or $|X|$ is a prime number $p$ and $X$ can be identified with the elements of the field $F_p$ so that the action of $G$ gets identified with the action of the subgroup of the affine group $AGL_1(p)=\{x\to ax+b|a\in (F_p)^*,b\in F_p\}$ that contains all the shifts $x\to x+b$.
\end{theorem}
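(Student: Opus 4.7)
The plan is to combine the standard structure theorem for primitive solvable permutation groups with a direct computation of element orders in the affine group $AGL_n(p)$. A primitive solvable group $G$ acting on a finite set $X$ has a unique minimal normal subgroup $N$, which is elementary abelian of order $p^n$ for some prime $p$ and some $n\geq 1$, and $N$ acts regularly on $X$. Using this regular action to identify $X$ with $N\cong\mathbb{F}_p^n$ as an additive group realises $G$ as a subgroup of $AGL_n(p)=\mathbb{F}_p^n\rtimes GL_n(p)$ in which $N$ is the subgroup of translations and the point stabilizer $H=G_0$ is an irreducible subgroup of $GL_n(p)$. For $n=1$ this already gives the second alternative of the theorem: $|X|=p$ is prime, $G\leq AGL_1(p)$, and $G$ contains every shift $x\mapsto x+b$ because these form $N$. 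The remaining task is to show that $n\geq 2$ forces $|X|=4$.

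Assume $n\geq 2$ and write the full cycle as $c(x)=Ax+b$ with $A\in GL_n(p)$, $b\in\mathbb{F}_p^n$, so that $c^m(x)=A^m x+B_m b$ with $B_m=I+A+\cdots+A^{m-1}$ and $B_m(A-I)=A^m-I$. The order $d=|A|$ divides $|c|=p^n$, hence is a power of $p$; in characteristic $p$ this forces every eigenvalue of $A$ over $\overline{\mathbb{F}_p}$ to equal $1$, so $A=I+N$ is unipotent and $A-I$ is nilpotent. From $B_d(A-I)=0$ one sees that $c^d$ is a translation (of order $1$ or $p$), so $|c|\in\{d,pd\}$ and therefore $d\in\{p^n,p^{n-1}\}$. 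A unipotent element of $GL_n(p)$ whose largest Jordan block has size $\mu\leq n$ has order $p^{\lceil\log_p\mu\rceil}$; the possibility $d=p^n$ requires $\mu>p^{n-1}$, incompatible with $\mu\leq n$ for $n\geq 2$, while $d=p^{n-1}$ gives $p^{n-2}<\mu\leq n$, leaving only $n=2$ (any prime $p$) and $(n,p)=(3,2)$.

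Each surviving case is then killed by a direct computation of $B_d$ in characteristic $p$. For $n=2$ we may take $A=I+N$ with $N\neq 0$ and $N^2=0$, and obtain $B_p=\sum_{i=0}^{p-1}(I+iN)=pI+\binom{p}{2}N=\binom{p}{2}N$, which vanishes whenever $p$ is odd; so $c^p=e$ and $|c|\leq p<p^2$, contradicting $|c|=p^2$. Hence $p=2$ and $|X|=4$. For $(n,p)=(3,2)$, $A$ must be a single Jordan block $I+N$ of size $3$ in characteristic $2$, and the identities $A^2=I+N^2$, $A^3=I+N+N^2$ give $B_4=4I+2N+2N^2=0$; thus $c^4=e$ and no $8$-cycle exists. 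This exhausts all possibilities with $n\geq 2$ and completes the proof.

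I expect the main obstacle to be computational rather than conceptual: one has to verify carefully that $A$ must be unipotent (this is where the fact that $|c|$ is a prime power enters crucially), track both subcases $|c|=d$ and $|c|=pd$ together with the Jordan-block bound $\mu\leq n$, and then execute the characteristic-$p$ vanishing computation for $B_d$. With those pieces in place the classification drops out of the short case analysis above.
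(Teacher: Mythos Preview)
The paper does not supply a proof of this theorem; it is quoted as a classical result of Ritt and used as a black box in the outline of the proof of Theorem~\ref{thm:ritt}. There is therefore nothing in the paper to compare your argument against.

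That said, your argument is correct. The reduction via the structure theorem for primitive solvable groups (unique minimal normal subgroup $N$, elementary abelian of order $p^n$, acting regularly, so $G\le AGL_n(p)$ with $N$ the translation subgroup) is the standard modern route, and your computation of element orders in $AGL_n(p)$ is clean: the key points---that $A$ must be unipotent because $|A|\mid p^n$, the Jordan-block bound $|A|=p^{\lceil\log_p\mu\rceil}$ with $\mu\le n$, and the explicit vanishing of $B_d$ in the two surviving cases---are all handled correctly. The case split $d\in\{p^{n-1},p^n\}$ together with $p^{n-2}<\mu\le n$ really does leave only $n=2$ and $(n,p)=(3,2)$, and your characteristic-$p$ evaluations $B_p=\binom{p}{2}N$ and $B_4=4I+2N+2N^2$ dispose of them. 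One small remark: you mention that the stabilizer $H=G_0$ is irreducible in $GL_n(p)$, but you never use this; the argument goes through purely from the existence of a $p^n$-cycle in $AGL_n(p)$, with primitivity and solvability needed only to set up the embedding.
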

\item \label{step:ExceptionalMonodromies}\textit{Monodromy groups of primitive polynomials invertible in radicals:} It can be shown by applying Riemann-Hurwitz formula that among the groups in Theorem \ref{thm:primitivesolvablegroupswithacycle} only the following groups can be realized as monodromy groups of polynomials: 1. $G\subset S(4)$, 2. Cyclic group $G=\{x\to x+b\}\subset AGL_1(p)$, 3. Dihedral group $G=\{x\to \pm x+b\}\subset AGL_1(p)$.
\item \label{step:DescriptionOfPolys}\textit{Primitive polynomials invertible in radicals:} It can be easily shown (see for instance \cite{Ritt22Radicals}, \cite{Khovanskii07Variations}, \cite{BurdaKhovanskii11Branching}) that the following result holds: 

\begin{theorem}
If the monodromy group of a polynomial is a subgroup of the group $\{x\to \pm x+b\}\subset AGL_1(p)$, then up to a linear change of variables the polynomial is either a power polynomials or a Chebyshev polynomial.
\end{theorem}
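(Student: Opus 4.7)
Since $p$ is prime, any subgroup of $\{x\to\pm x+b\}$ containing a $p$-cycle contains the full cyclic subgroup $C_p$ of translations, and so is either $C_p$ itself or the full dihedral group $D_p=\{x\to\pm x+b\}$ of order $2p$. The plan is to do a Riemann--Hurwitz ramification count to restrict the branching, and then reconstruct the polynomial in each case: the cyclic case yields a power polynomial and the dihedral case a Chebyshev polynomial.

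For the count, every non-identity translation is a $p$-cycle contributing $p-1$ to the ramification divisor, while every non-identity element of $D_p\setminus C_p$ has one fixed point and $(p-1)/2$ transpositions, contributing $(p-1)/2$. The branch point at infinity contributes $p-1$, and Riemann--Hurwitz for a genus-zero degree-$p$ cover gives total ramification $2p-2$, so the finite branch points must contribute exactly $p-1$. The nonnegative integer solutions of $(p-1)a+\tfrac{p-1}{2}b=p-1$ are $(a,b)=(1,0)$, which forces $G=C_p$ with a unique finite branch point over which $f$ is totally ramified (so after linear changes $f(z)=z^p$), and $(a,b)=(0,2)$, corresponding to $G=D_p$ with two finite branch points of reflection type, normalized to $\pm 1$.

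In the dihedral case I would recover $f$ from the Galois closure $\widetilde X\to\mathbb{P}^1$. The intermediate quotient $Z=\widetilde X/C_p$ is a degree-$2$ cover of $\mathbb{P}^1$ ramified exactly at $\pm 1$ and unramified at $\infty$ (the monodromy at $\infty$ lies in $C_p$); hence $Z\cong\mathbb{P}^1$ and, after normalizing coordinates, the map is the Joukowski map $w\mapsto(w+w^{-1})/2$. The further cover $\widetilde X\to Z$ is cyclic of degree $p$ and, by multiplicativity of ramification indices, branched only over the two preimages of $\infty$, where it is totally ramified, so $\widetilde X\cong\mathbb{P}^1$ and the map is the power map $z\mapsto z^p=w$. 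Composing, $\widetilde X\to\mathbb{P}^1$ is $z\mapsto(z^p+z^{-p})/2$. The original $f$ is the quotient of $\widetilde X$ by a reflection in $D_p$; taking this reflection to be $z\mapsto z^{-1}$ gives the quotient coordinate $x=(z+z^{-1})/2$, and $f(x)=T_p(x)$ is the Chebyshev polynomial.

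The main technical point is the rigidity of the two intermediate covers: a degree-$2$ cover of $\mathbb{P}^1$ branched at two given points, and a cyclic degree-$p$ cover of $\mathbb{P}^1$ totally branched at two given points, are each unique up to isomorphism of the source. Both are standard, but they are where the ``recovery'' really happens. The only other subtlety is that all reflections in $D_p$ (with $p$ odd) are conjugate, so different choices of the reflection subgroup used to pass from $\widetilde X$ to $X$ give quotients that differ by an automorphism of $\widetilde X$ descending to a linear change of coordinates — which is precisely the ``up to a linear change of variables'' in the statement.
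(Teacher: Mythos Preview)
The paper does not actually prove this theorem; it only states it and refers to \cite{Ritt22Radicals}, \cite{Khovanskii07Variations}, \cite{BurdaKhovanskii11Branching} for proofs. Your argument is correct, so there is nothing to object to on that score.

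It is, however, worth noting that your route via the Galois closure is different from the more elementary arguments in the cited sources. The classical approach works directly with the branching data you computed: in the cyclic case one has $f'(z)=c(z-z_0)^{p-1}$ and integrates; in the dihedral case, after sending the two finite critical values to $\pm 1$, the factorizations $f(z)\mp 1=(z-\alpha_\pm)\,g_\pm(z)^2$ give $f(z)^2-1=(z-\alpha_+)(z-\alpha_-)h(z)^2$, and a short comparison with $f'$ (or the substitution $z=\cos\theta$) yields the Chebyshev differential relation $p^2\bigl(f^2-1\bigr)=(z^2-1)\,f'^2$ after an affine change, from which $f=T_p$ follows. That argument is shorter and entirely self-contained. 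Your Galois-closure argument, on the other hand, is more conceptual: it explains \emph{why} $T_p$ appears, as the quotient by a reflection of the tower $z\mapsto z^p\mapsto (z^p+z^{-p})/2$, and it generalizes cleanly to other settings (e.g.\ Dickson polynomials in positive characteristic, or to rational functions). One small point you might make explicit: the reason different choices of the reflection subgroup $H\subset D_p$ lead only to an \emph{affine} change of source variable is that any isomorphism between two degree-$p$ polynomial covers of $\mathbb{P}^1$ must fix the unique preimage of $\infty$, hence is affine; the conjugacy of reflections alone gives only a M\"obius identification.
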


Thus the polynomials with monodromy groups 1-3 are respectively 1. Polynomials of degree four. 2. Power polynomials up to a linear change of variables. 3. Chebyshev polynomials up to a linear change of variables.

In each of these cases the fact that the polynomial is invertible in radicals follows from solvability of its monodromy group or from explicit formulas for its inverse (see for instance \cite{BurdaKhovanskii11Branching}).
\end{enumerate}

The outline of the proof of Theorem \ref{thm:ksolvabilitynonprimitive} is completely parallel to the outline discussed above. For the step \ref{step:kSolvabilityAndGlois} we use results from \cite{Khovanskii08Book}, for step \ref{step:PrimitiveSolvableGroupsWithCycle} --- results from \cite{Feit80Simple} and \cite{Jones02Cyclic}, for step \ref{step:ExceptionalMonodromies} --- results from \cite{Muller93Primitive}, \cite{Jones02Cyclic} and, finally, for step \ref{step:DescriptionOfPolys} --- results from \cite{JonesZvonkin02Cacti}, \cite{Adrianov97PlaneTrees}, \cite{CassouNogues99Factorizations} and personal communication with P. M\"{u}ller and A. Zvonkin.

\section{Background on representability in $k$-radicals}

It follows from the definition of a polynomial is invertible in $k$-radicals that a composition of polynomials is invertible in $k$-radicals if and only if each one of the polynomials in composition is invertible in $k$-radicals. Thus a polynomial is invertible in $k$-raddicals if and only if it is a composition of primitive polynomials invertible in $k$-radicals. In what follow we only consider primitive polynomials invertible in  $k$-radicals.

Invertibility of a polynomial in radicals depends only on its monodromy group:

% проверить: не надо ли требовать простоты G_i/G_{i-1}? Точно ли \leq k, а не =k элементах?
\begin{definition}
A group $G$ is $[k]$-solvable if there exist subgroups $1=G_0\triangleleft G_1 \triangleleft \ldots \triangleleft G_{n-1} \triangleleft G_n=G$ such that for each $i>0$, $G_i/G_{i-1}$ is either abelian, or admits a faithful action on a set with $\leq k$ elements.
\end{definition}

It can be easily shown that this definition is equivalent to the following: 

\begin{definition}
A group $G$ is $[k]$-solvable if there exist subgroups $1=G_0\triangleleft G_1 \triangleleft \ldots \triangleleft G_{n-1} \triangleleft G_n=G$ such that for each $i>0$, $G_i/G_{i-1}$ is a simple group, which is either abelian, or contains a subgroup of index $\leq k$.
\end{definition}

The following result from \cite{Khovanskii08Book} describes when a field extension is $k$-radical:

\begin{theorem}
\label{thm:ksolvability}
An extension $L/K$ of fields of characteristic zero is $k$-radical if and only if the Galois group $Gal(L/K)$ is $[k]$-solvable.
\end{theorem}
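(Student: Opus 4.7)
The plan is to adapt, step by step, the classical Galois-theoretic argument that a radical extension has solvable Galois group and conversely, substituting for ``cyclic/abelian factor'' the wider class of factors allowed in the $[k]$-solvable definition. Throughout I interpret $Gal(L/K)$ as the Galois group of the Galois closure $M$ of $L$ in $\bar K$; this is harmless because $[k]$-solvability passes to quotients, and because the Galois closure of a $k$-radical extension is again $k$-radical (apply elements of $Gal(M/K)$ to a $k$-radical tower and compose).

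For the implication ``$Gal(L/K)$ is $[k]$-solvable $\Rightarrow$ $L/K$ is $k$-radical'', I would fix a normal series $1 = G_0 \triangleleft \cdots \triangleleft G_n = Gal(M/K)$ and let $M_i = M^{G_i}$, giving a chain $K = M_n \subset M_{n-1} \subset \cdots \subset M_0 = M$ with each $M_{i-1}/M_i$ Galois with group $G_i/G_{i-1}$. An abelian factor is dispatched exactly as in the classical theorem: adjoin a sufficiently large root of unity (an abelian, hence radical, step) and then use Kummer theory to split off cyclic layers, each obtained by adjoining an $m$-th root. For a factor $G_i/G_{i-1}$ that admits a faithful action on a set of size $\leq k$, pick a point stabilizer $H$ of index $\leq k$, pull it back to $\widetilde H\subset G_i$, and set $F = M^{\widetilde H}$; then $[F:M_i]=[G_i:\widetilde H]\leq k$, so $F/M_i$ is obtained by adjoining a root of a single polynomial of degree $\leq k$. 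Faithfulness of the action forces $\bigcap_{g\in G_i} g\widetilde H g^{-1} = G_{i-1}$, so assembling the $G_i$-conjugates of $F$ one at a time---each again obtained by adjoining a root of the same degree-$\leq k$ polynomial---reaches $M_{i-1}$.

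For the converse, I would induct on the length $n$ of a tower $K = K_0 \subset \cdots \subset K_n \supset L$ witnessing that $L/K$ is $k$-radical, constructing alongside it a tower $K = F_0 \subset \cdots \subset F_n$ of extensions, each Galois over $K$, with $K_i \subset F_i$, and each step $F_{i-1}\subset F_i$ built from sub-steps of the two permitted kinds. If $K_i = K_{i-1}(a_i)$ with $a_i^m = b_i\in K_{i-1}$, take $F_i$ to be $F_{i-1}$ with $\zeta_m$ and all $m$-th roots of every $Gal(F_{i-1}/K)$-conjugate of $b_i$ adjoined; after the abelian cyclotomic step this reduces to a sequence of Kummer-cyclic, hence abelian, layers. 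If instead $a_i$ is a root of some $p\in K_{i-1}[x]$ of degree $\leq k$, let $F_i$ be the splitting field over $F_{i-1}$ of the product $\prod_\sigma \sigma(p)$ taken over a set of representatives $\sigma\in Gal(F_{i-1}/K)$; this is Galois over $K$ and is built up by successively adjoining splitting fields of the individual conjugates $\sigma(p)$, each of degree $\leq k$, each contributing a Galois step whose group acts faithfully on the $\leq k$ roots. Then $Gal(F_n/K)$ is $[k]$-solvable by construction, and $Gal(M/K)$ is a quotient.

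The only genuinely delicate point is the bookkeeping needed to keep each $F_i$ Galois over $K$ while preserving the bound $k$ on the degree of each individual equation adjoined: one must refrain from taking the full Galois closure of $p$ all at once, and instead adjoin the conjugates $\sigma(p)$ one polynomial at a time, so that no sub-step ever involves a polynomial of degree exceeding $k$. The equivalence of the two stated definitions of $[k]$-solvable group is used freely to switch between ``faithful action on $\leq k$ elements'' and ``simple factor with a subgroup of index $\leq k$'' whenever convenient. Modulo these points, both implications are routine inductions on the length of the tower (respectively normal series).
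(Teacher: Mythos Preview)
The paper does not give its own proof of this theorem: it is quoted as a known result from \cite{Khovanskii08Book}, so there is nothing to compare against. Your sketch is a correct adaptation of the classical Galois-theoretic proof of the solvability criterion. Both directions go through as you describe, once you verify the closure properties of $[k]$-solvability you are using implicitly: it passes to quotients and subgroups (immediate from the composition-series formulation via Jordan--H\"older) and is closed under group extensions (concatenate the subnormal series). In the forward direction it is cleanest to adjoin a primitive $|G|$-th root of unity to $K$ at the outset, so that the abelian factors become pure Kummer steps and the non-abelian factors are unaffected; this avoids having to track how adjoining roots of unity mid-tower interacts with the remaining factors. In the converse direction your bookkeeping is exactly right: adjoining the splitting fields of the $\sigma(p)$ one at a time keeps each Galois step embedded in $S_k$, and the product $\prod_\sigma \sigma(p)$ lies in $K[x]$, so $F_i/K$ is indeed Galois.
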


In particular a polynomial is invertible in $k$-radicals if and only if its monodromy group is $[k]$-solvable.

\section{Results of Feit, M\"{u}ller and Jones}

The following result on primitive permutation groups containing a full cycle had been derived by Feit as a consequence of classification of finite simple groups \cite{Feit80Simple}. We provide a version of it due to Jones in  \cite{Jones02Cyclic}, in which the formulation of case \ref{case:JonesRefinement} is more accurate than the one in \cite{Feit80Simple}:

\begin{theorem}
A primitive group of permutations of $n$ elements contains a full cycle if and only if one of the following conditions holds:
\begin{enumerate}
\item $G = S_n$
\item $n$ is odd, $G=A_n$ is the group of even permutations acting naturally on $n$ elements,
\item  $n$ is prime, $C_n \subseteq G \subseteq AGL_1(n)$ acting naturally on the field $F_n$, where $C_n$ denotes a cyclic group of shifts inside the affine group $AGL_1(n)$.
\item \label{case:JonesRefinement} $n = \frac{q^d-1}{q-1}$, where $q$ is a power of prime and $PGL_d(q) \subseteq G \subseteq P\Gamma L_d(q)$ acting naturally either on points or on hyperplanes of the projective space $P(F_q^d)$, 
\item $n=11$ and $G = PSL_2(11)$ acting on $11$ cosets of one of two of its subgroups of index $11$,
\item $n=11$ and $G$ is Mathieu group $M_{11}$ acting naturally on $11$ elements,
\item $n=23$ and $G$ is Mathieu group $M_{23}$ acting naturally on $23$ elements.
\end{enumerate}
\end{theorem}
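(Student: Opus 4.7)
The plan is to reduce the statement to the classification of 2-transitive finite groups (which rests on CFSG) and then to sift that classification for the presence of a full cycle. Write $\sigma \in G$ for a fixed $n$-cycle, so that $C := \langle\sigma\rangle$ is a cyclic regular subgroup of $G$ of order $n$.

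First I would handle the dichotomy \emph{primitive with regular cyclic subgroup $\Rightarrow$ 2-transitive or of prime degree}. If $n$ is prime, the classical theorem of Burnside on transitive groups of prime degree says that $G$ is either 2-transitive or contained in $AGL_1(n)$; the latter immediately yields case 3. If $n$ is composite, a theorem of Schur on $B$-groups guarantees that a primitive group containing a regular cyclic subgroup of composite order is automatically 2-transitive. Thus in all remaining cases we may assume $G$ is 2-transitive on $n$ elements and contains an $n$-cycle.

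Next I would invoke the CFSG-based classification of finite 2-transitive groups, which partitions such groups into an affine family (socle $(\mathbb{Z}/p)^d$, with $G \subseteq AGL_d(p)$ acting on $\mathbb{F}_p^d$) and an almost simple family (with socle among $A_n$, $PSL_d(q)$, $PSU_3(q)$, $Sz(q)$, ${}^2G_2(q)$, $Sp_{2m}(2)$, $PSL_2(11)$ on $11$ points, the Mathieu groups $M_{11}, M_{12}, M_{22}, M_{23}, M_{24}$, and a few sporadic items like $HS$ and $Co_3$). For each entry on this list one then asks whether a full $n$-cycle can lie in the group, which is a finite and mechanical check. For $S_n$ and $A_n$ (the latter only when $n$ is odd) the cycle is present trivially. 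For $PGL_d(q) \subseteq G \subseteq P\Gamma L_d(q)$ on points or hyperplanes of $P^{d-1}(\mathbb{F}_q)$, the Singer cycle furnishes an element of order $(q^d-1)/(q-1) = n$ acting as a full cycle; Jones' refinement pins down precisely which overgroups $G$ between $PGL_d(q)$ and $P\Gamma L_d(q)$ admit such a cycle, correcting an overcount in Feit's original list. The two groups on $11$ letters and the Mathieu group $M_{23}$ admit a full cycle by direct inspection of their character tables or permutation characters, while $M_{12}$, $M_{22}$, $M_{24}$, $HS$, $Co_3$ and the Lie-type families $PSU_3(q)$, $Sz(q)$, ${}^2G_2(q)$, $Sp_{2m}(2)$ are ruled out by showing that no element of the group has order equal to $n$, or has order $n$ but acts with fixed points.

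The main obstacle is the affine family and the Lie-type exceptions. For the affine case $G \subseteq AGL_d(p)$ with $d>1$ acting on $\mathbb{F}_p^d$, one has to rule out the existence of an element of order $p^d$ acting regularly on the affine space: since the translation subgroup has exponent $p$ while $n=p^d$, a full cycle would force the point stabilizer $G_0 \subseteq GL_d(p)$ to contain an element of order $p^{d-1}$ acting with a specific cyclic structure, and a Sylow-theoretic argument (together with the fact that a $p$-element of $GL_d(p)$ has order at most $p^{\lceil \log_p d \rceil}$) excludes this whenever $d>1$. Similarly, for each Lie-type 2-transitive family one must compute the spectrum of element orders and compare it to the degree, the standard reference for these calculations being the orders tables in the Atlas and the papers of Kantor, Hering, and ultimately M\"uller. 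Once every family on the CFSG list has been either matched with a case in the statement or excluded, the theorem follows.
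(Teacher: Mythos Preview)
The paper does not prove this theorem at all: it is quoted as a result from the literature, attributed to Feit \cite{Feit80Simple} with the refinement of case~\ref{case:JonesRefinement} due to Jones \cite{Jones02Cyclic}, and used thereafter as a black box. There is therefore no ``paper's own proof'' to compare your proposal against.

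That said, your outline is essentially the route Feit and Jones actually follow: the Burnside/Schur reduction (a primitive group with a regular cyclic subgroup is 2-transitive or of prime degree inside $AGL_1(p)$), followed by the CFSG classification of finite 2-transitive groups, followed by a case-by-case check for an $n$-cycle. Your handling of the affine case with $d>1$ is slightly garbled --- an $n$-cycle $\sigma$ with $n=p^d$ need not project to an element of order $p^{d-1}$ in $G_0$; the cleaner argument is simply that any $p$-element of $AGL_d(p)$ has order at most $p\cdot p^{\lceil \log_p d\rceil -1}$, which is $<p^d$ once $d\ge 2$ --- but the overall strategy is correct and is exactly what the cited references do. If you want to present this as a self-contained proof rather than a citation, you should also note that Jones' contribution in case~\ref{case:JonesRefinement} is precisely the determination of which intermediate groups $PGL_d(q)\subseteq G\subseteq P\Gamma L_d(q)$ contain a Singer cycle (all of them do, in fact), correcting an imprecision in Feit's original statement.
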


Using this result, Riemann-Hurwitz formula, M\"{u}ller proved the following result on monodromy groups of primitive polynomials \cite{Muller93Primitive}:

\begin{theorem}
\label{thm:mullerthm}
A group of permutations of $n$ elements is a monodromy group of a primitive polynomial if and only if one of the following conditions holds:
\begin{enumerate}
\item $G=S_n$
\item $n$ is odd, $G=A_n$ is the group of even permutations acting naturally on $n$ elements,
\item  $n$ is prime, $C_n \subseteq G \subseteq D_n=\{x\to \pm x + b \mod n\}$ acting naturally on the field $F_n$,
\item $n=11$ and $G = PSL_2(11)$ acting on $11$ cosets of one of its subgroups of index  $11$,
\item $n = \frac{p^d-1}{p-1}$, where $p$ is a prime number and $G=PGL_d(p)$ acting naturally either on points or on hyperplanes of the projective space $P(F_p^d)$, where $(p,d)$ is one of the following pairs: $(5,2),(7,2),(2,3),(3,3),(2,4),(2,5)$ (in these cases $n$ is, respectively 6,8,7,13,15,31) 
\item $n = \frac{q^d-1}{q-1}$, where $q$ is a power of a prime number and $G=P\Gamma L_d(q)$ acting naturally either on points or on hyperplanes of the projective space $P(F_q^d)$, where $(q,d)$ is one of the following pairs: $(8,2),(9,2),(4,3)$  (in these cases $n$ is, respectively 9,10,21) 
\item $n=11$ and $G$ is Mathieu group $M_{11}$ acting naturally on $11$ elements,
\item $n=23$ and $G$ is Mathieu group $M_{23}$ acting naturally on $23$ elements.
\end{enumerate}
\end{theorem}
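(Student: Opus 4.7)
The plan is to combine the Feit--Jones classification of primitive permutation groups containing a full cycle (stated just above) with the Riemann--Hurwitz formula, which restricts which of those candidates can actually be realized as monodromy groups of polynomials. The forward direction proceeds as follows: the monodromy group $G$ of a primitive polynomial of degree $n$ acts primitively on the fibre, contains an $n$-cycle (the monodromy around $\infty$), and therefore already lies in the Feit--Jones list. What must be done is to eliminate from that list the cases not appearing in Theorem \ref{thm:mullerthm}, i.e.\ the groups strictly between $D_n$ and $AGL_1(n)$, all pairs $(q,d)$ not listed for $PGL_d$ and $P\Gamma L_d$, and so on.

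The key tool for these eliminations is Riemann--Hurwitz applied to the branched cover $p:\mathbb{P}^1\to\mathbb{P}^1$. Since both source and target have genus $0$,
$$\sum_{v} \mathrm{ind}(\sigma_v) \;=\; 2n-2,$$
where the sum runs over critical values $v$, $\sigma_v\in G$ is the local monodromy, and $\mathrm{ind}(\sigma)=n-c(\sigma)$ with $c(\sigma)$ the number of cycles of $\sigma$ in the action on $n$ points. The point at infinity alone contributes $\mathrm{ind}(\sigma_\infty)=n-1$, so the finite critical values satisfy
$$\sum_{v\neq\infty} \mathrm{ind}(\sigma_v) \;=\; n-1,$$
together with the relation that their product (times the full cycle) is the identity and that they generate a primitive subgroup containing the cycle. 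For each family coming out of Feit--Jones I would enumerate the conjugacy classes of $G$ and their indices, and ask whether these two conditions admit a solution. In the affine case $C_n\subsetneq G\subseteq AGL_1(n)$, an element $x\mapsto ax+b$ with $|a|\geq 3$ fixes exactly one point and so has index $n-1$; having even one such monodromy element exhausts the entire index budget, and it is then easy to see that one cannot assemble a tuple generating $G$ and multiplying to the full cycle, which forces $G\subseteq D_n$. In the projective cases the indices of non-identity elements of $PGL_d(q)$ and $P\Gamma L_d(q)$ in their natural actions grow quickly with $q$ and $d$, and a uniform lower bound on $\mathrm{ind}(\sigma)$ compared to the $n-1$ budget rules out all pairs $(q,d)$ beyond the finite list of the theorem. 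The remaining finitely many candidates ($PSL_2(11)$, the handful of projective cases, and the Mathieu groups $M_{11}$, $M_{23}$) are dispatched by a direct character-theoretic / conjugacy-class enumeration: one lists the possible ramification types compatible with Riemann--Hurwitz and either produces a valid tuple of elements or shows that none exists.

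The converse---that each group on the list is actually realized---splits into the classical cases (the symmetric and alternating groups, where standard generic constructions suffice), the cyclic/dihedral subgroups of $AGL_1(p)$ (realized by power and Chebyshev polynomials, as noted in the previous section), and the remaining exceptional entries. For the exceptional entries existence is established by exhibiting explicit primitive polynomials (equivalently, Belyi maps or dessins d'enfants) with the prescribed monodromy, using constructions drawn from \cite{JonesZvonkin02Cacti}, \cite{Adrianov97PlaneTrees} and \cite{CassouNogues99Factorizations}.

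The main obstacle is precisely the elimination, by Riemann--Hurwitz, of the two infinite families $PGL_d(q)$ and $P\Gamma L_d(q)$ down to their few realized pairs $(p,d)$ and $(q,d)$: this requires controlling the minimum index of a non-identity element in these doubly transitive actions as a function of $q$ and $d$, together with a careful case analysis of admissible partitions of $n-1$ into such indices and of the resulting multiplication relations in the group. This arithmetic bookkeeping is the technical heart of M\"uller's argument in \cite{Muller93Primitive}, and building on the Feit--Jones classification it is what ultimately produces the clean finite list above.
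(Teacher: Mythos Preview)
The paper does not give its own proof of this theorem: it is quoted as M\"{u}ller's result from \cite{Muller93Primitive}, with only the one-line indication ``Using this result [Feit--Jones], Riemann--Hurwitz formula, M\"{u}ller proved the following\ldots''. Your sketch is an accurate outline of exactly that approach---Feit--Jones to get the candidate list, Riemann--Hurwitz with the index budget $\sum_{v\neq\infty}\mathrm{ind}(\sigma_v)=n-1$ to prune it, and explicit realizations for the survivors---so there is nothing to contrast; you have reconstructed the method the paper attributes to M\"{u}ller rather than any argument the paper itself supplies.
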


\section{$[k]$-solvable monodromy groups of primitive polynomials}

According to Theorem \ref{thm:ksolvability} a polynomial is invertible in $k$-radicals if and only if its monodromy group is $[k]$-solvable, i.e. if its monodromy group $G$ contains subgroups $1=G_0\triangleleft G_1 \triangleleft \ldots \triangleleft G_{n-1} \triangleleft G_n=G$ such that for each $i>0$, $G_i/G_{i-1}$ is a simple group which is either abelian or contains a subgroup of index $\leq k$.

For each group from Theorem \ref{thm:mullerthm} we can determine the smallest $k$ for which it is $[k]$-solvable:

\begin{theorem}
Let $G$ be a group of permutations of $n$ elements, appearing in Theorem \ref{thm:mullerthm}. The group $G$ is $[k]$-solvable if and only if:
\begin{enumerate}
\item $k$ is any natural number and \\ $G=S_n,n\leq 4$ or \\
 $n$ is prime and $C_n \subseteq G \subseteq D_n=\{x\to \pm x + b \mod n\}$
\item $k\geq n$ and \\ $G=S_n$, or \\ $G=A_n$ for odd $n\geq 5$, or \\ $G = PSL_2(11)$ or $G=M_{11}$ for $n=11$, or \\ $G=M_{23}$ for $n=23$, or \\ $G=PGL_3(2)$ for $n=7$, or \\ $G=PGL_3(3)$ for $n=13$, or \\ $G=PGL_5(2)$ for $n=31$, or \\ $G=P\Gamma L_3(4)$ for $n=21$, or \\ $G=P\Gamma L_2(8)$ for $n=9$,
\item $G=PGL_2(5)$ , $k\geq 5$,
\item $G=P\Gamma L_2(9)$, $k\geq 6$,
\item $G=PGL_2(7)$, $k\geq 7$,
\item $G=PGL_4(2)$, $k\geq 8$.
\end{enumerate}
\end{theorem}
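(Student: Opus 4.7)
The plan is to reduce the problem to a question about composition factors. Using the second definition of $[k]$-solvability together with the Jordan--Hölder theorem, a finite group $G$ is $[k]$-solvable if and only if every non-abelian composition factor $S$ of $G$ contains a proper subgroup of index at most $k$. Writing $\mu(S)$ for the minimum degree of a faithful transitive permutation representation of a non-abelian simple group $S$ (equivalently the minimum index of a proper subgroup), the smallest $k$ for which $G$ is $[k]$-solvable is $\max_S \mu(S)$, maximised over the non-abelian simple composition factors of $G$ (and $k$ is arbitrary when no such factor exists).

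First I would dispose of the groups with only abelian composition factors: $S_n$ for $n\leq 4$ (classically solvable) and every subgroup of $D_n$ for prime $n$ (whose composition factors are cyclic of orders $n$ and possibly $2$). These account for item 1 of the statement.

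For every remaining group in Theorem \ref{thm:mullerthm} I would identify, up to isomorphism, its unique non-abelian simple composition factor $T$ and compute $\mu(T)$. The identification uses the exceptional isomorphisms $PSL_2(5)\cong A_5$, $PSL_2(7)\cong PSL_3(2)$, $PSL_2(9)\cong A_6$ and $PSL_4(2)\cong A_8$, together with the equalities $PGL_d(q)=PSL_d(q)$ when $\gcd(d,q-1)=1$ (covering $PGL_3(2),PGL_3(3),PGL_5(2)$ and $PGL_2(8)$), and the fact that $PSL_d(q)$ is the unique non-abelian composition factor of $P\Gamma L_d(q)$. The natural $n$-element action of $A_n$ and the natural action of $PSL_d(q)$ on the $(q^d-1)/(q-1)$ points of projective space provide the upper bounds $\mu(A_n)\leq n$ and $\mu(PSL_d(q))\leq (q^d-1)/(q-1)$, which in each case match the threshold asserted by the theorem; for example the threshold $k\geq 5$ for $PGL_2(5)$ comes from $\mu(A_5)=5$, the threshold $k\geq 6$ for $P\Gamma L_2(9)$ from $\mu(A_6)=6$, the threshold $k\geq 7$ for $PGL_2(7)$ from $\mu(PSL_2(7))=7$, and the threshold $k\geq 8$ for $PGL_4(2)$ from $\mu(A_8)=8$.

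The main, though entirely routine, obstacle is establishing the matching lower bounds $\mu(T)\geq n$, i.e.\ that each simple group $T$ admits no faithful permutation representation of smaller degree. For $A_n$ with $n\geq 5$ this is classical; the only delicate case is $A_6$, whose outer automorphism yields two conjugacy classes of index-$6$ subgroups but no subgroup of index $5$, as the latter would embed $A_6$ into $S_5$. For the groups $PSL_d(q)$ appearing in the list, the bound follows from the known classification of their maximal subgroups (available in the \textsc{Atlas} of finite groups, or via Aschbacher's theorem in the larger examples), and for $M_{11}$ and $M_{23}$ from the standard description of their maximal subgroup lattices.
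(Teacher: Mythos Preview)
Your proposal is correct and follows essentially the same approach as the paper: both reduce via Jordan--H\"older to determining, for each group in M\"uller's list, the minimal index of a proper subgroup in its unique non-abelian simple composition factor, identify that factor (using the same exceptional isomorphisms $PSL_2(5)\cong A_5$, $PSL_2(7)\cong PSL_3(2)$, $PSL_2(9)\cong A_6$, $PSL_4(2)\cong A_8$), and read off the minimal indices from the \textsc{Atlas}. The paper records the values in a short table rather than arguing upper and lower bounds separately, but the content is the same.
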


\begin{proof}

Let $G$ be a finite group and let $\{e\}=G_0 \triangleleft G_1 \triangleleft \ldots \triangleleft G_n=G$ be its composition series. Then the smallest $k$ for which $G$ is $[k]$-solvable is the smallest $k$ for which all the composition factors $G_{i+1}/G_i$ are either abelian or contain a proper subgroup of index at most $k$.

The group $A_n$, $n\geq 3$ doesn't contain a proper subgroup of index smaller than $n$ (otherwise $A_n$ can be embedded in $S_k$ for $k<n$ and $n!/2$ is $<k!$).

The group $A_n$ is a composition factor of groups $S_n$ and $A_n$, $n\geq 5$ from Theorem \ref{thm:mullerthm}, and hence these groups are $[k]$-solvable only for $k\geq n$.

The simple groups $M_{11}$ and $M_{23}$ don't have a proper subgroup of index smaller than 11 and 23 respectively \cite{ATLAS}, and thus they are $k$-solvable only for $k\geq 11$ and $k\geq 23$ respectively.

Compositional factors of groups $PGL_n(q)$ and $P\Gamma L_n(q)$ (for $n\geq 2$ and $q\neq 2,3$) are either abelian or isomorphic to the simple group $PSL_n(q)$, as can be seen from the natural homomorphisms onto abelian groups $P\Gamma L_n(q)\to \operatorname{Aut}(F_q)$ with kernel $PGL_n(q)$ and $PGL_n(q)\xrightarrow{\det} F_q^*/(F_q^*)^n$ with kernel $PSL_n(q)$ ($(F_q^*)^n$ is the subgroup of invertible elements of $F_q$ that are $n$-th powers). For small $n$ and $q$ the smallest index of a proper subgroup of $PSL_n(q)$ can be found in \cite{ATLAS} (we use the notation $L_n(q)$ for $PSL_n(q)$).

\begin{table}[h]
\centering
\begin{tabular}{|c|*{10}{@{\hspace{0.5ex}}c@{\hspace{0.5ex}}|}}
\hline $G$ & $L_2(5)$ & $L_2(7)$ & $L_3(2)$ & $L_2(11)$ & $L_2(8)$ & $L_2(9)$ & $L_3(3)$ & $L_4(2)$ & $L_3(4)$ & $L_5(2)$ \\ 
\hline $k$ & 5 & 7 & 7 & 11 & 9 & 6 & 13 & 8 & 21 & 31 \\ 
\hline 
\end{tabular} 
\end{table}

In all the cases except $L_2(5)$,$L_2(7)$,$L_2(9)$,$L_4(2)$ this $k$ coincides with the number of elements on which the corresponding group from Theorem \ref{thm:mullerthm} acts.

In cases $L_2(5)$,$L_2(7)$,$L_2(9)$,$L_4(2)$ one has the following exceptional isomorphisms: $PSL_2(F_5)=A_5$, $PSL_2(F_7)=PSL_3(F_2)$, $PSL_2(F_9)=A_6$, $PSL_4(F_2)=A_8$.
\end{proof}

A polynomial of prime degree with cyclic or dihedral monodromy group is, up to a linear change of variables, a power polynomial or Chebyshev polynomial respectively. Thus we obtain the following theorem:

\begin{theorem}
\label{thm:ksolvabilitygroup}
A primitive polynomial is invertible in $k$-radicals if and only if it has degree at most  $k$, or one of the following conditions holds:
\begin{enumerate}
\item $1\leq k$, the degree of the polynomial is a prime number and up to a linear change of variables the polynomial is a power polynomial or Chebyshev polynomial,
\item $k\leq 3$, the degree of the polynomial is 4,
\item \label{case_pol:pgl25} $k=5$, the degree of the polynomial is 6 and its monodromy group is $PGL_2(5)$,
\item \label{case_pol:pgammal29} $6 \leq k\leq 9$, the degree of the polynomial is 10 and its monodromy group is $P\Gamma L_2(9)$,
\item \label{case_pol:pgl27} $k=7$, the degree of the polynomial is 8 and its monodromy group is $PGL_2(7)$,
\item \label{case_pol:pgl42} $8\leq k\leq 14$, the degree of the polynomial is 15 and its monodromy group is $PGL_4(2)$.
\end{enumerate}

\end{theorem}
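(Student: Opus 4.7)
The plan is to combine Theorem \ref{thm:ksolvability} with M\"{u}ller's Theorem \ref{thm:mullerthm} and the preceding theorem that records, for each primitive monodromy group of a polynomial, the smallest $k$ for which the group is $[k]$-solvable. The argument is essentially a cross-referencing exercise and requires no new group-theoretic input.

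First I would dispose of the easy direction. If a primitive polynomial has degree $n \leq k$, then its monodromy group $G$ acts faithfully on the $n \leq k$ sheets of the inverse, so the trivial composition series $1 \triangleleft G$ exhibits $G$ as $[k]$-solvable in one step and Theorem \ref{thm:ksolvability} yields invertibility in $k$-radicals. For polynomials of prime degree whose monodromy sits inside $D_n = \{x \to \pm x + b\}$, the group is solvable and hence $[1]$-solvable, and the classification recalled in step \ref{step:DescriptionOfPolys} of the outline identifies these polynomials as the power and Chebyshev polynomials up to a linear change of variables; this establishes case 1. The degree $4$ case for $k \leq 3$ is analogous: $S_4$ is solvable, so every primitive polynomial of degree 4 is $[1]$-solvable and hence invertible in $k$-radicals for every $k \geq 1$, and it must be listed explicitly only when $k \leq 3$ because for $k \geq 4$ it is already absorbed into the degree-at-most-$k$ clause.

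For the converse I would take a primitive polynomial of degree $n > k$ invertible in $k$-radicals; Theorems \ref{thm:ksolvability} and \ref{thm:mullerthm} then force its monodromy group to lie in M\"{u}ller's list and to be $[k]$-solvable. Consulting the preceding theorem, the groups in that list whose $[k]$-solvability threshold is strictly less than the degree on which they act are exactly $PGL_2(5)$ on $6$ points (threshold $5$), $P\Gamma L_2(9)$ on $10$ points (threshold $6$), $PGL_2(7)$ on $8$ points (threshold $7$), and $PGL_4(2)$ on $15$ points (threshold $8$), supplying cases 3--6 of the statement. Every remaining group in M\"{u}ller's list --- $S_n$, $A_n$, $PSL_2(11)$, $M_{11}$, $M_{23}$, $PGL_3(2)$, $PGL_3(3)$, $PGL_5(2)$, $P\Gamma L_2(8)$, $P\Gamma L_3(4)$ --- has threshold exactly equal to its degree $n$, so $[k]$-solvability forces $k \geq n$ and the polynomial falls back into the degree-at-most-$k$ clause.

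The only place where genuine care is needed, and hence the main obstacle to a clean write-up, is the correct identification of the four exceptional degrees, which rests on the exceptional isomorphisms $PSL_2(5) \cong A_5$, $PSL_2(7) \cong PSL_3(2)$, $PSL_2(9) \cong A_6$, $PSL_4(2) \cong A_8$ already invoked in the proof of the preceding theorem: it is precisely these coincidences that produce a composition factor admitting a subgroup of index strictly smaller than the natural degree of the action. Once the thresholds in that theorem are granted, each of the six cases in the present statement is obtained simply by intersecting the range of admissible $k$ (at least the threshold, but less than the degree for cases 3--6) with the degrees appearing in M\"{u}ller's list.
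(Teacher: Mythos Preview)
Your proposal is correct and follows the same approach as the paper: the paper derives this theorem as an immediate corollary of the preceding threshold theorem together with the identification of prime-degree polynomials with cyclic or dihedral monodromy as power or Chebyshev polynomials, and that is precisely the cross-referencing you carry out. If anything, you spell out more detail than the paper does, since the paper's justification is a single sentence before the statement.
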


The polynmomials appearing in the exceptional cases \ref{case_pol:pgl25}-\ref{case_pol:pgl42} can be described explicitly: in cases \ref{case_pol:pgl25}-\ref{case_pol:pgl27} there is only a finite number of such polynomials up to a linear change of variables, while in case \ref{case_pol:pgl42} equivalence classes of such polynomials up to linear change of variables form two one-parametric families. Below we describe such polynomials.

\subsection{Polynomials, invertible in  5-radicals}
\label{section:deg5}

According to Theorem \ref{thm:ksolvabilitygroup}, polynomials invertible in 5-radicals are compositions of power polynomials, Chebyshev polynomials, polynomials of degree at most  5 and polynomials of degree 6 with monodromy group isomorphic to the group $PGL_2(5)$ with its natural action on the 6 points of the projective line over the field $F_5$ (the dual action of the group $PGL_2(5)$ on hyperplanes of the projective line over $F_5$ is the same as the action on points, since in this case hyperplanes are in fact just points).

\begin{theorem}
A primitive polynomial of degree six is invertible in 5-radicals if and only if one of the following conditions holds:
\begin{itemize}
\item The monodromy group of the polynomial is isomorphic to the group $PGL_2(5)$ with its natural action on $P^1(F_5)$
\item The passport of the polynomial is $[2^21^2,4^11^2]$
\item Dessin d'enfant of the polynomial is \\
\includegraphics[width=3cm]{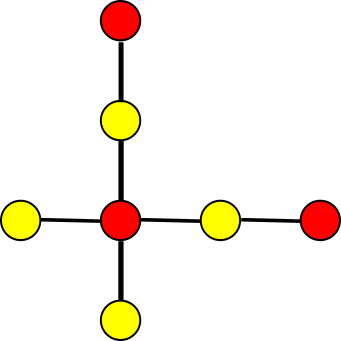}
\item By means of an affine change of variables the polynomial can be brought to the form $p(z)=z^4(z^2+6z+25)$
\end{itemize}
\end{theorem}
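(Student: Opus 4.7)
The plan is to prove (1), (2), (3), (4) equivalent for a primitive degree-6 polynomial by threading group-theoretic data through ramification, combinatorial, and algebraic descriptions. By Theorem \ref{thm:ksolvabilitygroup}, condition (1) is already the criterion for invertibility in $5$-radicals among primitive degree-$6$ polynomials, so the remaining work is to show that (2), (3), and (4) each cut out the same family.

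For (1) $\Leftrightarrow$ (2): classify the conjugacy classes of $PGL_2(5)$ acting on $P^1(F_5)$ by cycle type. Split involutions give $[2^21^2]$, non-split involutions give $[2^3]$; elements of orders $3, 4, 5, 6$ give $[3^2]$, $[4^11^2]$, $[5^11^1]$, $[6^1]$ respectively (the absence of non-trivial cube roots of unity in $F_5^*$ rules out $[3^11^3]$). Riemann-Hurwitz forces $\sum_i(6-c(\sigma_i))=5$ summed over finite critical points, with a full $6$-cycle at infinity. A finite branch with monodromy $[6^1]$ would make the polynomial affinely equivalent to $z^6$ and hence imprimitive, so the only admissible partitions of $5$ are $3+2$. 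This leaves two candidate passports: $[2^21^2,4^11^2]$ and $[2^21^2,2^3]$. The second has two involutions as finite monodromies, so $\langle\sigma_1,\sigma_2\rangle$ is dihedral of order $12$, which is imprimitive on $6$ points; hence this passport cannot arise from a primitive polynomial. The remaining passport $[2^21^2,4^11^2]$ is thus forced, and a Frobenius character count confirms that every product-one tuple realizing it generates $PGL_2(5)$ rather than $S_6$ (the only other primitive degree-$6$ monodromy allowed by Theorem \ref{thm:mullerthm}).

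For (2) $\Leftrightarrow$ (3): with two finite critical values the dessin d'enfant is a bicolored plane tree on $6$ edges, with white- and black-vertex degree sequences $(4,1,1)$ and $(2,2,1,1)$; a direct combinatorial enumeration (or an application of the Frobenius formula) produces a single such tree up to combinatorial equivalence, matching the depicted dessin. For (3) $\Leftrightarrow$ (4): realize the dessin as an honest polynomial by normalizing the $[4^11^2]$-critical value to $0$ with its $4$-fold preimage at $z=0$, giving $p(z)=z^4(z^2+bz+c)$. The remaining critical points $z_1,z_2$ are the roots of $6z^2+5bz+4c$, and the passport requires $p(z_1)=p(z_2)$, i.e.\ $(z-z_1)^2(z-z_2)^2$ divides $p(z)-p(z_1)$. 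Comparing coefficients reduces this divisibility to the single equation $b(25b^2-36c)=0$. The case $b=0$ gives $p(z)=z^4(z^2+c)=q(z^2)$ with $q(w)=w^2(w+c)$, which is a composition and hence not primitive; the surviving branch $c=25b^2/36$ is a one-parameter family, which the residual scaling $z\mapsto\lambda z$ reduces to the unique normal form $p(z)=z^4(z^2+6z+25)$.

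The main obstacle is the rigidity step: confirming that $[2^21^2,4^11^2]$ is the only passport realizing a primitive polynomial with $PGL_2(5)$ monodromy, and that the corresponding dessin is unique up to combinatorial equivalence. Ruling out $[2^21^2,2^3]$ by dihedral imprimitivity is short, but the Frobenius character computation that distinguishes $PGL_2(5)$ from $S_6$ for the surviving passport requires care; here the enumerations of plane trees with prescribed passport in \cite{JonesZvonkin02Cacti} and \cite{Adrianov97PlaneTrees} provide the cleanest bookkeeping.
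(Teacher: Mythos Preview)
Your overall architecture matches the paper's, but there is a genuine error in the (2)$\Leftrightarrow$(3) step. You assert that a direct enumeration of bicolored plane trees with white degrees $(4,1,1)$ and black degrees $(2,2,1,1)$ yields a \emph{single} tree. This is false: there are exactly two such plane trees, distinguished by the cyclic arrangement of the four black neighbours around the central white vertex --- the pattern $2,1,2,1$ versus $2,2,1,1$. The paper enumerates both and observes that the first (the one with the $180^\circ$ rotational symmetry) realizes a composition of a cubic with $z\mapsto z^2$, hence is imprimitive; only the second carries monodromy $PGL_2(5)$, which the paper verifies by explicitly labelling edges by elements of $P^1(F_5)$. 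Your claim that ``every product-one tuple realizing it generates $PGL_2(5)$'' is therefore also wrong: one of the two tuples generates an imprimitive group.

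This error is visibly inconsistent with your own (3)$\Leftrightarrow$(4) computation, where the branch $b=0$ produces exactly the imprimitive polynomial $z^4(z^2+c)=q(z^2)$ --- a polynomial with the very passport $[2^21^2,4^11^2]$ whose dessin you claimed was unique. So the passport alone does not pin down the dessin; primitivity must be invoked to discard the symmetric tree, and then one must separately check (as the paper does) that the surviving tree has monodromy $PGL_2(5)$ rather than some other primitive group. A smaller point: your reduction of the divisibility condition to $b(25b^2-36c)=0$ drops the factor corresponding to the discriminant $25b^2-96c$ of $6z^2+5bz+4c$; the paper's remainder computation gives $b(96c-25b^2)(36c-25b^2)=0$, and the extra factor is the degenerate case $z_1=z_2$, which changes the passport and must be excluded explicitly.
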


\begin{proof}
The permutations of 6 elements given by the action of $PGL_2(5)$ on $P^1(F_5)$ have cyclic structures $1^6,2^21^2,2^3,4^11^2,3^2,5^11^1,6^1$. Since the derivative of a polynomial of degree 6 has 5 roots counted with multiplicities, the cyclic structures of permutations corresponding to small loops around the critical values must be either $2^21^2,2^3$, or $2^21^2,4^11^2$. The first choice corresponds (according to Theorem 18 from \cite{Khovanskii07Variations}) to the case of Chebyshev polynomial. A polynomial with passport $[2^21^2,4^11^2]$ can have one of the two dessin d'enfants:\\
\includegraphics[width=4cm]{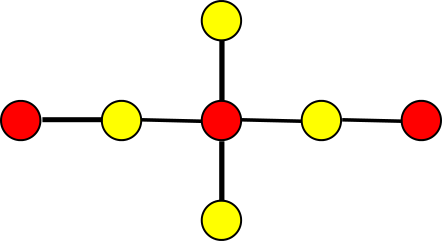} \hspace{9ex}
\includegraphics[width=3cm]{pgl25topdessin.png}

A polynomial with the first dessin d'enfant is a composition of a polynomial of degree 3 and a polynomial of degree 2.

The monodromy group of a polynomial with the second dessin d'enfant is $PGL_2(5)$.

Indeed, if one labels the edges of the dessin as in the picture below,\\
\includegraphics[width=4cm]{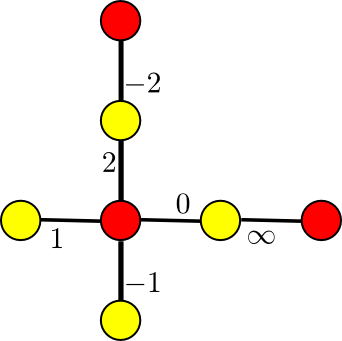}\\
then the small loops around the critical values correspond to the permutations $x\to \frac{1}{x} \mod 5$ and $x\to 2x+2 \mod 5$, which generate the group $PGL_2(5)$.

We now show that by an affine change of variables a polynomial of degree six with monodromy group $PGL_2(5)$ can be brought to the form $z^4(z^2+6z+25)$.

As we found above, the passport of such a polynomial $p$ is $[2^21^2,4^11^2]$.  By an affine change of coordinates one can make the point of multiplicity 4 to be at zero and make the polynomial vanish at this point. One can also make the leading coefficient of the polynomial be 1. Then the polynomial has the form $p(z)=z^4(z^2+az+b)$. Its derivative  is $p'(z)=z^3(6z^2+5az+4b)$. The values of the polynomial $p$ at the zeroes of the factor $6z^2+5az+4b$ must be equal, and hence the remainder of division of $p$ by $6z^2+5az+4b$ must be a constant polynomial. The coefficient at $z$ of the remainder of division of $p$ by $6z^2+5az+4b$ is $\frac{1}{6^5}a(96b-25a^2)(36b-25a^2)$. If $a=0$, then the polynomial $p$ is a composition of a polynomial of degree 3 and the polynomial $z^2$. If $96b=25a^2$, then $6z^2+5az+4b$ is a complete square, and hence the passport of $p$ is not $[2^21^2,4^11^2]$. Finally if $36b=25a^2$, then by a linear change of variables the polynomial $p$ one can make $p$ to be the polynomial $z^4(z^2+6z+25)$ with critical values $0$ and $-\frac{2^4 5^5}{3^3}$.

A picture of the dessin d'enfant of this polynomial on which the preimage of the upper half-plane is colored black (and red and yellow dots are the preimages of the critical values) is as follows:

\includegraphics[width=5cm]{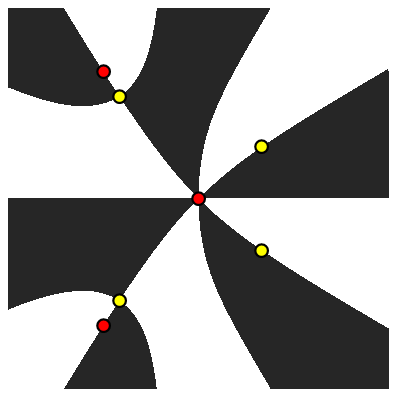}

\end{proof}

\subsection{Polynomials invertible in 6-radicals}
\label{sec:pgammal29}
According to Theorem \ref{thm:ksolvabilitygroup}, polynomials invertible in 6-radicals are compositions of power polynomials, Chebyshev polynomials, polynomials of degree at most 6 and polynomials of degree 10 with monodromy group isomorphic to the group $P\Gamma L_2(9)$ with its natural action on the 10 points of projective line over the field with nine elements $F_9$.

\begin{theorem}
A primitive polynomial of degree 10 is invertible in 6-radicals if and only if one of the following conditions holds:
\begin{itemize}
\item The monodromy group of the polynomial is isomorphic to the group $P\Gamma L_2(9)$ with its natural action on $P^1(F_9)$
\item The dessin d'enfant of the polynomial is \\
\includegraphics[width=3cm]{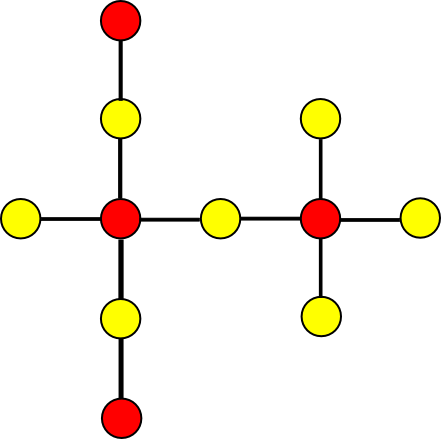}
\item By means of an affine change of variables the polynomial can be brought to the form $p(z)=\left(z^2-\frac{81}{500}\right)^4 \left(z^2+z+\frac{189}{500}\right)$
\end{itemize}
\end{theorem}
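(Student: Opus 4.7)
The plan is to follow exactly the strategy of the $PGL_2(5)$ case above. First, I would enumerate the cycle structures of the permutations of the $10$ points of $P^1(F_9)$ induced by elements of $P\Gamma L_2(9)$. Of particular importance is the Frobenius automorphism $x\mapsto x^3$ of $F_9$, which fixes the four points of $P^1(F_3)\subset P^1(F_9)$ and pairs the remaining six, contributing the cycle type $2^3 1^4$. Second, since the derivative of a polynomial of degree $10$ has $9$ roots counted with multiplicity, the cycle types $\sigma_1,\sigma_2$ associated to small loops around the two finite critical values must satisfy $(10-\#\text{cycles}(\sigma_1))+(10-\#\text{cycles}(\sigma_2))=9$, while $\sigma_1\sigma_2$ must be a full $10$-cycle (the loop around infinity). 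Working through the conjugacy classes of $P\Gamma L_2(9)$ and discarding cases whose monodromy would actually lie in a smaller subgroup such as $PGL_2(9)$ (so that $\sigma_1\sigma_2$ fails to lie in the outer coset) or would be imprimitive, I expect to be left with exactly the passport $[4^2 1^2,\, 2^3 1^4]$.

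Next, I would enumerate bicolored plane trees with $10$ edges realizing this passport and show that up to isomorphism there is a unique indecomposable one, namely the dessin pictured in the theorem. Any alternative tree with the same passport has to be a composition of a degree-$5$ tree with the degree-$2$ polynomial $z^2$ (since $10=2\cdot 5$ is the only nontrivial factorization of $10$), and such compositions are visibly distinguished by the existence of an orientation-preserving involutive symmetry of the dessin. After labelling the edges of the indecomposable dessin by the $10$ points of $P^1(F_9)$, I would exhibit the two resulting monodromy permutations and verify that they generate exactly $P\Gamma L_2(9)$, not merely a proper subgroup and not a larger group such as $A_{10}$ or $S_{10}$; Theorem \ref{thm:mullerthm} is useful here in ruling out the alternating and symmetric groups as candidate monodromy groups for this passport.

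Finally, to obtain the normal form I would proceed exactly as in the $PGL_2(5)$ case. By an affine change of coordinates one may centre the two preimages of multiplicity $4$ symmetrically about $0$ and make the polynomial monic and vanishing at those points, so that $p(z)=(z^2-c^2)^4(z^2+az+b)$. A computation of $p'$ shows that $p'(z)=(z^2-c^2)^3 Q(z)$ for an explicit cubic $Q$ in $z$ with coefficients in $a,b,c$. The requirement that $Q$ have three distinct roots and that $p$ take the same value at all of them translates into the condition that the remainder of dividing $p$ by $Q$ be a constant polynomial, which yields a system of polynomial equations in $a,b,c$. Solving this system and discarding the degenerate solutions --- those with $a=0$ (making $p$ factor through $z^2$) and those where $Q$ acquires a repeated root or shares a root with $z^2-c^2$ --- leaves a single orbit under the residual affine symmetry, which after a final rescaling is exactly $p(z)=\left(z^2-\tfrac{81}{500}\right)^4\left(z^2+z+\tfrac{189}{500}\right)$.

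The main obstacle I expect is the passport determination, which requires careful bookkeeping of the conjugacy classes of $P\Gamma L_2(9)$ acting on $P^1(F_9)$ together with the coset constraint on $\sigma_1\sigma_2$. The dessin uniqueness is then a small combinatorial verification, and the algebraic normalization follows the exact template laid down in the preceding subsection --- the only new feature being that the polynomial equations in $a,b,c$ have higher degree and must be solved by elimination rather than by inspection.
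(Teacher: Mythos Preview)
Your outline tracks the paper's proof closely in structure, but it is missing one ingredient that the paper uses in an essential way: a \emph{rigidity} argument. After identifying the relevant conjugacy classes in $P\Gamma L_2(9)$ (the unique class $C_1$ of $10$-cycles, the unique class $C_2$ of type $2^31^4$, and the correct one $C_3$ of the two classes of type $4^21^2$), the paper checks that the equation $\sigma_1\sigma_2\sigma_3=1$ with $\sigma_i\in C_i$ has a \emph{unique} solution up to conjugacy. This rigidity simultaneously (i) singles out the dessin without any combinatorial enumeration of trees, and (ii) forces the polynomial to be defined over $\mathbf{Q}$.

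Point (ii) is exactly what makes the algebraic normalization go through. When you carry out the elimination on $p(z)=(z^2-a)^4(z^2+z+b)$, you do not get only the degenerate solutions you list: besides $a=-27/100$ (where $q_3$ becomes a cube) and the desired $a=81/500$, the system also has solutions with $a$ lying in number fields of degree $5$ and $9$ over $\mathbf{Q}$. These are \emph{not} degenerate---they are honest primitive polynomials with passport $[4^21^2,2^31^4]$ but with monodromy $S_{10}$ rather than $P\Gamma L_2(9)$. The paper discards them precisely because rigidity guarantees the $P\Gamma L_2(9)$ polynomial is rational, so only the $\mathbf{Q}$-rational value $a=81/500$ can be the right one. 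Correspondingly, your claim that ``any alternative tree with the same passport has to be a composition'' is not correct: those extra algebraic solutions give further indecomposable dessins with the same passport, so a purely combinatorial enumeration would not isolate the $P\Gamma L_2(9)$ tree unless you separately compute the monodromy of each candidate.
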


\begin{proof}
One can check \cite{Muller93Primitive}, p. 10, that the only possible passport of a polynomial of degree 10 with monodromy group $P\Gamma L_2(9)$ is the passport $[2^31^4,4^21^2]$.

We will let $i$ denote an element $i\in F_9$ satisfying $i^2=-1$. We will also denote the Frobenius automorphism of the field $F_9$ by $x\to \overline{x}$.

The group $P\Gamma L_2(9)$ acting on 10 elements of the projective line over the field $F_{9}$ contains only one conjugacy class of a 10-cycle: it is the class $C_1$ of the element $\frac{1+x}{i-x}$. It also contains only one conjugacy class $C_2$ of an element with cyclic structure $2^31^4$: it is the class of the element $x\to \overline{x}$. There are two conjugacy classes of elements with cyclic structure $1^24^2$: the class $C_3$ of element $x\to (1+i) \overline{x}$ and the class $\overline{C}_3$ of the element $x\to (1+i) x$. Only the class $C_3$ can correspond to local monodromy of our polynomial, since the product of elements of classes $C_1,\overline{C}_3$ belongs to the subgroup $PGL_2(9)$ of the group $P\Gamma L_2(9)$, and thus can't belong to $C_2$. One can verify that there exists only one solution (up to conjugacy) of the equation $\sigma_1\sigma_2\sigma_3=1$ with $\sigma_i\in C_i$: $\sigma_1=x\to\frac{1+x}{i-x}$,$\sigma_2=x\to \overline{x}$,$\sigma_3=x\to \frac{i\overline{x}-1}{\overline{x}+1}$. Thus the branching data for our polynomial are rigid \cite{Volklein96GroupsAsGalois}, Definition 2.15. Hence our polynomial is defined over the rationals \cite{Volklein96GroupsAsGalois}, Theorem 3.8.

It follows from the considerations above that the dessin d'enfant of the polynomial of degree 10 with monodromy group $P\Gamma L_2(9)$ is as follows:\\
\includegraphics[width=3cm]{pgammal29topdessin.png}

Conversely, the monodromy group of a polynomial with such dessin is isomorphic to $P\Gamma L_2(9)$, because one can label the edges of the dessin with elements of $P^1(F_9)$ as follows:\\
\includegraphics[width=4.5cm]{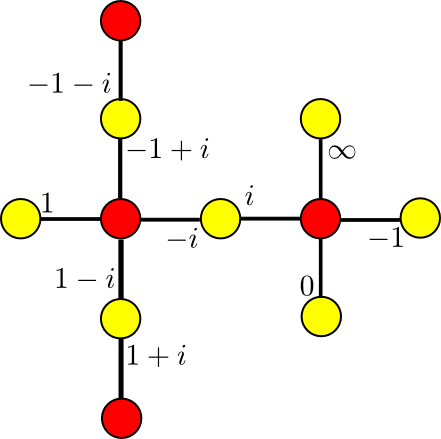}\\
Then local monodromies around the critical values correspond to the permutations $x\to \overline{x}$ and $x\to \frac{i\overline{x}-1}{\overline{x}+1}$, which generate the group $P\Gamma L_2(9)$.

We now show that using an affine change of variables the polynomial can be brought to the form $\left(z^2-\frac{81}{500}\right)^4 \left(z^2+z+\frac{189}{500}\right)$. By means of a change of variables defined over the rationals one can make sure that the critical value corresponding to the critical points of order 4 is zero. One can also make the average of the these two critical points be at zero. By means of a further change of variables one can bring the polynomial to the form $p(z)=(z^2-a)^3(z^2+z+b)$. In this case  $p'(z)=(z^2-a)^3(10z^3+9z^2+(8b-2a)z-a)$. Since the values of $p$ at the zeroes of the polynomial $q_3(z)=10z^3+9z^2+(8b-2a)z-a$ must be equal, the remainder from division of $p$ by $q_3$ must be a constant polynomial. Equating the coefficients at $z$ and $z^2$ of this remainder to zero and eliminating the variable $b$ we find that the value of $a$ can be equal either to $\frac{-27}{100}$, or to $\frac{81}{500}$, or to a root of a polynomial of degree 5 or 9, that is irreducible over the rationals.

The value $a=-\frac{27}{100}$ corresponds to the case when $q_3$ is a complete cube, in which the passport of the polynmial $p$ is not the one that we want.

The value $a=\frac{81}{500}$ corresponds to $b=\frac{189}{500}$. 

The cases when $a$ is a root of irreducible over $\mathbf{Q}$ polynomials of degree 5 or 9 correspond to polynomials with monodromy groups different from $P\Gamma L_2(9)$ (we have seen above that our polynomial is defined over $\mathbf{Q}$).

Thus by means of an affine change of variables the polynomial can be made equal to the polynomial $\left(z^2-\frac{81}{500}\right)^4 \left(z^2+z+\frac{189}{500}\right)$ with critical values 0 and $\frac{2^43^{12}}{5^{15}}$.

A picture of the dessin d'enfant of this polynomial on which the preimage of the upper half-plane is colored black (and red and yellow dots are the preimages of the critical values) is as follows:

\includegraphics[width=5cm]{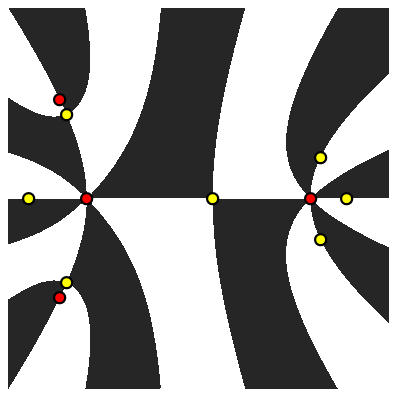}

\end{proof}

\subsection{Polynomials invertible in 7-radicals}

According to Theorem \ref{thm:ksolvabilitygroup}, polynomials invertible in 7-radicals are compositions of power polynomials, Chebyshev polynomials, polynomials of degree at most 7, polynomials of degree 10 with monodromy group isomorphic to $P\Gamma L_2(9)$ described in the section above, and polynomials of degree $8$ with monodromy group isomorphic to  $PGL_2(7)$ with its natural action on the 8 points of the projective line over the field $F_7$.

\begin{theorem}
A primitive polynomial of degree 8 is invertible in 7-radicals if and only if one of the following conditions holds:
\begin{itemize}
\item The monodromy group of the polynomials is isomorphic to the group $PGL_2(7)$ with its natural action on $P^1(F_7)$,
\item The dessin d'enfant of the polynomial is one of the following: \\
\includegraphics[height=3cm]{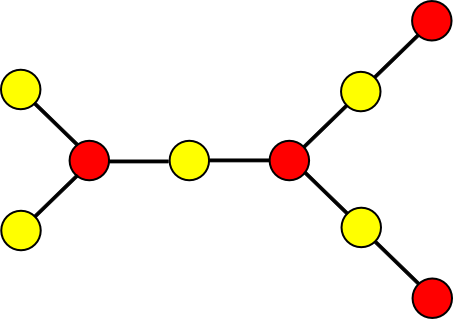}\hspace{8ex}
\includegraphics[height=3cm]{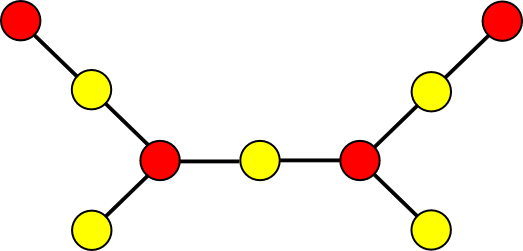}
\item By means of an affine change of variables the polynomial can be brought to the form  $p(z)=(z^2+\frac{25+ 22\sqrt{2}}{64})^3(z^2+z+\frac{97+ 54 \sqrt 2}{64})$ or to the form $p(z)=(z^2+\frac{25- 22\sqrt{2}}{64})^3(z^2+z+\frac{97- 54 \sqrt 2}{64})$.
\end{itemize}
\end{theorem}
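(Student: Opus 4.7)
The plan is to follow the same strategy used in the two previous subsections. First I would list the cycle structures of elements of $PGL_2(7)$ in its natural action on $P^1(F_7)$. Since the non-trivial element orders divide one of $q-1=6$, $q=7$, or $q+1=8$, the available cycle structures are $1^8$, $2^4$, $2^3 1^2$, $3^2 1^2$, $4^2$, $6^1 1^2$, $7^1 1^1$, $8^1$; the unique $8$-cycle conjugacy class furnishes the monodromy at $\infty$. Since the derivative of a degree $8$ polynomial has degree $7$, the ramification excesses at the finite critical values must sum to $7$, and running through the list (together with the condition that a triple of appropriate conjugacy-class representatives has product $\sigma_\infty^{-1}$ and generates a primitive subgroup containing an $8$-cycle) one finds that the only passport compatible with a primitive polynomial of monodromy equal to $PGL_2(7)$ is $[3^2 1^2,\, 2^3 1^2]$, with total excess $4 + 3 = 7$.

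Next, I would enumerate the topologically inequivalent dessins d'enfant realizing this passport. For each candidate dessin I would label its edges by elements of $P^1(F_7)$ so that the two local monodromies around the critical values become explicit linear fractional transformations, and check that these two elements indeed generate $PGL_2(7)$ rather than a proper subgroup (such as an imprimitive one corresponding to a decomposable polynomial, or one lying inside $PSL_2(7)$). This procedure should isolate exactly the two dessins displayed in the statement and discard any extra planar maps that arise as decomposable compositions.

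To exhibit the polynomials explicitly, I would use an affine change of variables to place the critical value associated to the two triple critical points at $0$ and their midpoint at $0$, and normalize the leading coefficient to $1$; this puts the polynomial in the form $p(z) = (z^2 + c)^3 (z^2 + z + b)$. A direct computation gives
\[
p'(z) = (z^2 + c)^2 \bigl(8 z^3 + 7 z^2 + (6b + 2c) z + c\bigr),
\]
and the requirement that $p$ take a common value on the three roots of the cubic factor $q_3(z)$ is equivalent to the remainder of $p$ modulo $q_3$ being a constant. Setting the $z^1$ and $z^2$ coefficients of this remainder to zero and eliminating $b$ yields a polynomial equation in $c$ whose factorization I expect to contain spurious factors corresponding to degenerate situations (e.g.\ $q_3$ a perfect cube, which gives a Chebyshev-type polynomial with the wrong passport) together with a genuine quadratic factor whose roots are $c = \tfrac{25 \pm 22\sqrt{2}}{64}$, yielding the two explicit polynomials in the statement.

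The main obstacle, as in section~\ref{sec:pgammal29}, is controlling the algebra of this final elimination step, and it is here compounded by the fact that the branching data is \textbf{not} rigid: unlike the $P\Gamma L_2(9)$ case, the two resulting polynomials are Galois conjugate over $\mathbf{Q}$ via $\sqrt{2}\mapsto-\sqrt{2}$ and neither is individually defined over $\mathbf{Q}$. I would confirm this non-rigidity by computing, up to simultaneous conjugation in $PGL_2(7)$, the number of generating triples $(\sigma_1, \sigma_2, \sigma_\infty)$ in the prescribed conjugacy classes with $\sigma_1 \sigma_2 \sigma_\infty = 1$, expecting to find two such triples, in agreement with the two mirror-image dessins interchanged by complex conjugation.
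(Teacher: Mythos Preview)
Your overall strategy matches the paper's, but there is one factual error and one genuine gap in the execution.

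The factual error: there is not a unique conjugacy class of $8$-cycles in $PGL_2(7)$. Elements of order $q+1$ in $PGL_2(q)$ are parametrized, up to conjugacy, by eigenvalue ratios in $F_{q^2}^*$ modulo inversion; for $q=7$ this gives $\phi(8)/2 = 2$ classes. The paper records them explicitly as the classes $C_1$ of $x\mapsto \frac{3}{2-x}$ and $C_1'$ of $x\mapsto \frac{3}{1-x}$. Consequently your diagnosis of the two dessins as coming from \emph{non-rigidity} within a fixed triple of classes is backwards: the paper shows that for each choice of $8$-cycle class the triple $(C_i,C_2,C_3)$ is rigid, and the two dessins correspond to the two choices $C_1$, $C_1'$. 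Since these two classes are rational over $\mathbf{Q}(\zeta_8)=\mathbf{Q}(\sqrt{2},i)$, the rigidity criterion gives that each polynomial is defined over $\mathbf{Q}(\sqrt{2},i)$.

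The gap is that this field-of-definition bound is exactly what you need, and do not currently have, to finish the algebraic step. When you eliminate $b$ from the two remainder conditions you will get more than just the degenerate linear factor and the quadratic $4096a^2+3200a-343$: there is also an irreducible factor of degree $6$ over $\mathbf{Q}$, corresponding to polynomials with the same passport $[2^31^2,3^21^2]$ but monodromy group $S_8$ rather than $PGL_2(7)$. Your proposal anticipates only ``spurious factors corresponding to degenerate situations'' and offers no mechanism to discard this sextic. The paper rules it out precisely by invoking the bound above: a root of an irreducible degree-$6$ polynomial over $\mathbf{Q}$ cannot lie in $\mathbf{Q}(\sqrt{2},i)$. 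Without correcting the conjugacy-class count and recovering the rigidity argument, you have no way to separate the $PGL_2(7)$ solutions from the $S_8$ ones at the level of the algebra.
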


\begin{proof}

One can verify \cite{Muller93Primitive}, p. 6, that the only possible passport of a polynomial of degree 8 woth moondromy group $PGL_2(7)$ is the passport $[2^31^2,3^21^2]$.

A polynomial with this passport can have one of the following dessins:

\begin{center}\includegraphics[height=3cm]{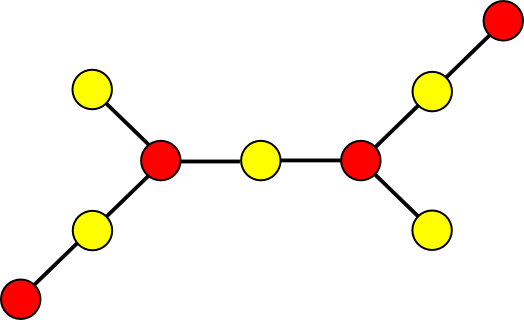}\end{center}
\includegraphics[height=3cm]{pgl27-1topdessin.png}\hspace{8ex}
\includegraphics[height=3cm]{pgl27-2topdessin.png}

A polynomial with the first dessin is a composition of polynomials of degree 2 and degree 4 (indeed, the dessin is invariant under rotation by 180 degrees).

The monodromy groups of polynomials with the last two dessins are isomorphic to the group  $PGL_2(7)$.
Indeed, if the edges of the dessins are labelled by elements of $P^1(F_7)$ as on the pictures below,\\
\includegraphics[height=3cm]{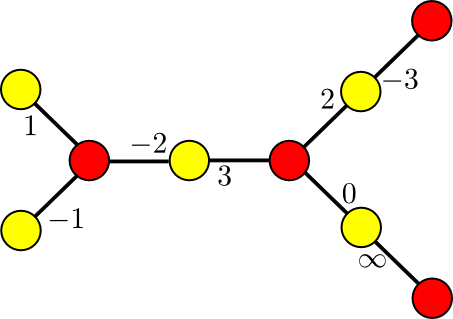}\hspace{8ex}
\includegraphics[height=3cm]{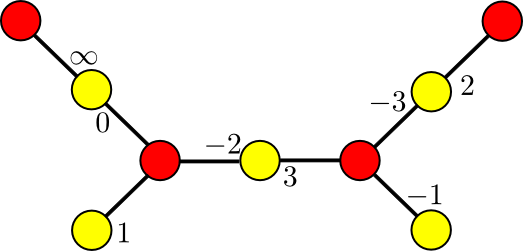}\\
then the local monodromies around the critical values correspond to the permutations $x\to \frac{1}{x}$ and $x\to 2-3x$ for the first of them, and to the permutations $x\to \frac{1}{x}$ and $x\to 1-3x$ for the second. In each case they generate the group  $PGL_2(7)$.

The group $PGL_2(7)$, acting on 8 elements of the projective line over $F_{7}$ contains two conjugacy classes of 8-cycles: the class $C_1$ of the element $\frac{3}{2-x}$ and the class $C_1'$ of the element $\frac{3}{1-x}$. It contains one conjugacy class $C_2$ of an element with cyclic structure $1^22^3$: it is the class of the element $x\to \frac{1}{x}$. There is one conjugacy class of an element with cyclic structure $1^23^2$: the class $C_3$ of the element $x\to 2x$. One can show that up to conjugacy there is only one solution of the equation $\sigma_1\sigma_2\sigma_3=1$ with $\sigma_i\in C_i$ (namely $\sigma_1=x\to \frac{3}{2-x}$, $\sigma_2=x\to \frac{1}{x}$, $\sigma_3=x\to 2-3x$). Also there is only one solution of the equation $\sigma_1\sigma_2\sigma_3=1$ with $\sigma_1\in C_1'$,$\sigma_2\in C_2$,$\sigma_3\in C_3$ (namely $\sigma_1=x\to \frac{3}{1-x}$, $\sigma_2=x\to \frac{1}{x}$, $\sigma_3=x\to 1-3x$).

Thus the branching data for our polynomial are rigid. The 8-cycle is defined over an extension of $\mathbf{Q}$ by a root of unity of order 8. Thus our polynomial is defined over the extension $\mathbf{Q}(\sqrt{2},i)$ of degree 4 over $\mathbf{Q}$.

As in the previous section, we can assume that the polynomial has the form  $p(z)=(z^2-a)^3(z^2+z+b)$. Then $p'(z)=(z^2-a)^2(8z^3+7z^2+(6b-2a)z-a)$. Since the values of the polynomial $p$ at the zeroes of the polynomial $q_3(z)=8z^3+7z^2+(6b-2a)z-a$ must be equal, the remainder from division of $p$ by $q_3$ must be a constant polynomial. Equating the coefficients at $z$ and $z^2$ of this remainder to zero and eliminating the variable $b$ we find that either $a=-\frac{343}{1728}$, or $4096 a^2+3200 a-343=0$, or $a$ is a root of a polynomial of degree 6 that is irreducible over the rationals.

The value $a=-\frac{343}{1728}$ corresponds to the case when $q_3$ is a complete cube. In this case the passport of the polynomial is not the one we are looking for.

The value $a=\frac{-25\pm 22\sqrt{2}}{64}$ corresponds to $b=\frac{97\mp 54 \sqrt 2}{64}$.

The case when $a$ is a root of an irreducible degree 6 polynomial over $\mathbf{Q}$ corresponds to polynomial with monodromy group different from $PGL_2(7)$ (our polynomial is defined over $\mathbf{Q}(\sqrt 2, i)$).

Pictures of the dessin d'enfants of these polynomials on which the preimage of the upper half-plane is colored black (and red and yellow dots are the preimages of the critical values) is as follows:

\includegraphics[width=5cm]{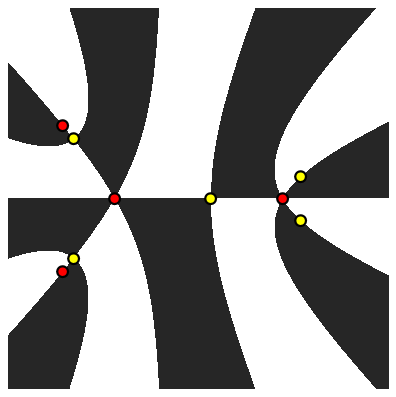}\hspace{6ex}
\includegraphics[width=5cm]{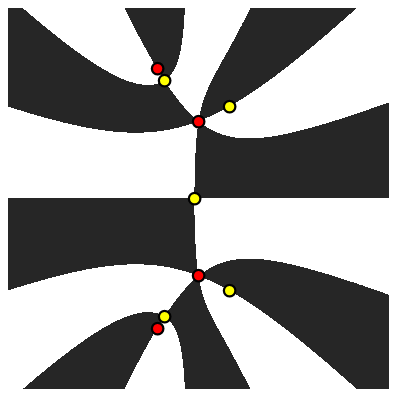}
\end{proof}

\subsection{Polynomials invertible in $k$-radicals, $8\leq k \leq 14$}
\label{section:deg15}

According to Theorem \ref{thm:ksolvabilitygroup}, polynomials invertible in $k$-radicals for $8\leq k \leq 14$ are compositions of power polynomials, Chebyshev polynomials, polynomials of degree at most $k$, polynomials of degree 10 with monodromy group  isomorphic to $P\Gamma L_2(9)$ described in section \ref{sec:pgammal29} and polynomials of degree $15$ with monodromy group isomorphic to $PGL_4(2)$ with its natural action either on the 15 points or on the 15 hyperplanes of the thee-dimensional projective space over the field $F_2$.

Polynomials of degree 15 with monodromy group isomorphic to $PGL_4(2)$ can have one of the following passports \cite{JonesZvonkin02Cacti},\cite{Adrianov97PlaneTrees}: $[2^61^3,2^41^7,2^41^7]$, $[4^32^11^1,2^41^7]$,$[4^22^21^3,2^61^3]$,$[6^13^22^11^1,2^41^7]$.

Such polynomials had been investigated in \cite{CassouNogues99Factorizations} in context of finding pairs of polynomials $g,h$ such that the curve $g(x)=h(y)$ is reducible. In \cite{CassouNogues99Factorizations} it is proved that a polynomial with monodromy group isomorphic to $PGL_4(2)$ and passport $[2^61^3,2^41^7,2^41^7]$ can be brought by an affine change of variables to the form
\begin{align*}g_t^a(x)&=
\frac{x^{15}}{15}
+(a-1) t x^{13}
+(a+7) t x^{12}
-(5 a+21) t^2 x^{11}
+2 (37 a-71) t^2 x^{10}\\
&-\frac{(261 a-349) (151598 t+141075 a-109260) t^2 }{3\cdot 151598}x^9\\
&-(649 a+703) t^3 x^8\\
&+\frac{3 (46 a+239) (76579 t+198260 a-462560) t^3}{76579} x^7\\
&-\frac{4 (548 a-1939) (259891 t+106365 a-26420) t^3}{259891} x^6\\
&+\frac{3 (1945 a-1581) (7278308 t+14685825 a-113700500) t^4}{5\cdot 7278308}  x^5\\
&+\frac{3 (3233 a+2051) (877444 t+1339725 a-2162500) t^4 }{877444} x^4\\
&+\frac{9 (9 a-133) \left(3\cdot 16816 t^2-162040 a t-320375a-1260960 t+23500\right) t^4 }{16816} x^3\\
&+\frac{9 (403 a-1559)  (2\cdot 2554 t+9165 a-39620) t^5}{2554}x^2\\
&-\frac{135}{16} (7 a+5) (4 t-75 a-100) (4 t+5 a-4)  t^5  x\\
&+675 (a-8) (t-16) t^6,
\end{align*} where $a$ is one of the two roots of the equation $a^2-a+4=0$ and $t$ is a complex number. 

Unfortunately the result is mentioned there only briefly and we haven't fully restored it. With the help of Alexandr Zvonkin we have come to the conclusion that most probably the following is true:

The polynomials from the families $g_t^a$ have monodromy group $PGL_4(2)$ with action on the points or on the hyperplanes of the space $P^3(F_2)$ depending on the choice of $a$ for all parameters $t\neq 0$.

All polynomials of degree 15 with monodromy group isomorphic to $PGL_4(2)$ can be brought by an affine change of variables to the form $g_t^a(x)$ for some $t$ and some choice of $a$. In particular the polynomials with monodromy group $PGL_4(2)$ with passports $[4^32^11^1,2^41^7]$,$[4^22^21^3,2^61^3]$,$[6^13^22^11^1,2^41^7]$ correspond to some values of the parameter. For instance for $t=75/4$ the polynomial $g_t^a$ has passport $[4^22^21^3,2^61^3]$ and dessin d'enfant

\includegraphics[height=5cm]{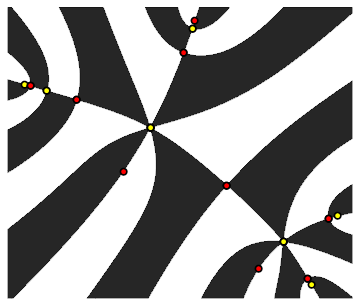}

(or its reflection for the other choice of $a$).

We will provide more details on the properties of the families $g_t^a$ and on how they can be found in a later version of this preprint.

Alexandr Zvonkin has kindly agreed to help us with this task.

%\textbf{Выяснить}: все ли значения параметра $t$ соответствуют полиномам с паспортом  $[2^61^3,2^41^7,2^41^7]$ или встречаются и другие? Есть ли среди них разложимые в композицию?

%\textbf{Выяснить}: как связана задача французов с исключительными группами монодромии (гипотеза: если есть два топологически неэквивалентных способа реализовать группу монодромии полиномами, то эти способы дают решение задачи французов)?  (возможно ответ есть у фрида, теорема 2.1)

\subsection{Polynomials invertible in $k$-radicals for $k \geq 15$}
\label{section:highdegree}

According to Theorem \ref{thm:ksolvabilitygroup}, polynomials invertible in $k$-radicals for $k \geq 15$ are compositions of power polynomials, Chebyshev polynomials and polynomials of degree at most $k$.

Thus there are no ``exceptional'' polynomials invertible in $k$-radicals for $k\geq 15$.

\bibliography{ksolvability}

\bibliographystyle{alpha}

\end{document}